\newtheorem{theorem}{Theorem}[section]
\newtheorem{lemma}[theorem]{Lemma}
\newtheorem{proposition}[theorem]{Proposition}
\newtheorem{corollary}[theorem]{Corollary}
\theoremstyle{definition}
\newtheorem{question}[theorem]{Question}
\newtheorem{remark}[theorem]{Remark}
\newtheorem*{remark*}{Remark}
\newcommand{\id}{\operatorname{id}}
\newcommand{\Aut}{\operatorname{Aut}}
\newcommand{\mcc}{M\textsuperscript{c}Carthy}
\renewcommand{\MR}[1]{}
\title[WCOs on the ball]{Weighted composition operators on Hilbert function spaces on the ball}
\author[M. Hartz]{Michael Hartz}
\address{Fachrichtung Mathematik, Universit\"at des Saarlandes, 66123 Saarbr\"ucken, Germany}
\email{hartz@math.uni-sb.de}
\thanks{M.H. was partially supported by the Emmy Noether Program
of the German Research Foundation (DFG Grant 466012782). }
\author[M. Tornes]{Maximilian Tornes}
\address{Department of Mathematics, University of Manitoba, Winnipeg, Canada}
\email{tornesm@myumanitoba.ca}
\date{\today}
\subjclass[2020]{Primary 46E22; Secondary 47B32, 47B33}
\keywords{Weighted composition operator, unitarily invariant space}
\begin{document}

\begin{abstract}
  A weighted composition operator on a reproducing kernel Hilbert space is given by a composition, followed by a multiplication.
  We study unitary and co-isometric weighted composition operators on unitarily invariant spaces on the Euclidean unit ball $\mathbb B_d$.
  We establish a dichotomy between the spaces $\mathcal{H}_\gamma$ with reproducing kernel $(1 - \langle z,w \rangle)^{-\gamma}$ for $\gamma > 0$, and all other spaces. Whereas the former admit many unitary weighted composition operators, the latter only admit trivial ones. This extends results of Mart\'in, Mas and Vukoti\'c from the disc to the ball.
  Some of our results continue to hold when $d = \infty$.
\end{abstract}

\maketitle

\section{Introduction}

Broadly speaking, this article is concerned with symmetries of reproducing kernel Hilbert spaces (RKHS) of holomorphic
functions on the Euclidean open unit ball $\mathbb{B}_d$ in $\mathbb{C}^d$.
We study unitarily invariant spaces, which by definition are RKHS $\mathcal{H}$ whose reproducing kernel is of the form
\begin{equation*}
  K(z,w) = \sum_{n=0}^\infty a_n \langle z,w \rangle^n \quad (z,w \in \mathbb{B}_d),
\end{equation*}
where $(a_n)$ is a sequence of non-negative real numbers. We also assume that $a_0 = 1$ (hence $1 \in \mathcal{H}$ and $\|1\| = 1$)
and $a_1 > 0$ (which implies that $\mathcal{H}$ contains all coordinate functions $z_j$).
This class contains many important spaces such as Hardy, Bergman \cite{ZZ08,Zhu05}, Drury--Arveson \cite{Arveson98,Hartz23,Shalit15} and Dirichlet spaces \cite{EKM+14}.

It easily follows from the definition that if $f \in \mathcal{H}$ and $U$ is a unitary $d \times d$ matrix, then $f \circ U \in \mathcal{H}$
and $\|f \circ U \| = \|f\|$. Thus, every unitary $d \times d$ matrix induces
a unitary composition operator on $\mathcal{H}$; we regard these as symmetries of the RKHS.

This naturally raises the question: Does $\mathcal{H}$ admit other symmetries?
Being a Hilbert space, $\mathcal{H}$ admits a large number of unitary operators, but most of these will not
respect the function space structure of $\mathcal{H}$ in any reasonable way. In the the context of RKHS,
a natural and frequently studied class of operators preserving the function space structure are weighted composition
operators (WCO), which are operators of the form
\begin{equation*}
  W_{\delta,\phi}: \mathcal{H} \to \mathcal{H}, \quad f \mapsto \delta \cdot (f \circ \phi).
\end{equation*}
Here, $\delta: \mathbb{B}_d \to \mathbb{C}$ is a function and $\phi: \mathbb{B}_d \to \mathbb{B}_d$ is a mapping.
As special cases, weighted composition operators include composition operators ($\delta = 1$) and multiplication operators
($\phi = \id$). Some of the history of weighted composition operators is described in the introductions of \cite{Le12} and \cite{Zorboska18}. In addition, we mention that weighted composition operators appear in the study of homogeneous operator tuples (see for instance \cite{MN90,KM19,GKP22}) and as cyclicity preserving operators (see for instance \cite[Section 5]{Sampat24}).

We will characterize unitary and even co-isometric weighted composition operators on unitarily invariant spaces on $\mathbb{B}_d$.
If $d = 1$, i.e.\ for spaces on the disc, this was done by Mart\'in, Mas and Vukoti\'c \cite{MMV20}.
Our results generalize theirs, but our proofs are different.
Indeed, in several variables, we cannot rely on the factorization theory for functions in the Hardy space as in \cite{MMV20}.

 For $\gamma \in (0,\infty)$, let $\mathcal{H}_\gamma$ be the RKHS on $\mathbb{B}_d$
with kernel
\begin{equation*}
  K(z,w) = \frac{1}{( 1- \langle z,w \rangle)^\gamma }.
\end{equation*}
These spaces include standard weighted Dirichlet spaces ($0 < \gamma < 1$), the Drury--Arveson space
($\gamma = 1$), the Hardy space ($\gamma = d$) and standard weighted Bergman spaces ($\gamma > d$),
but for instance not the classical Dirichlet space.
Just as Mart\'in, Mas and Vukoti\'c, we observe a dichotomy between the spaces $\mathcal{H}_\gamma$, which admit many unitary WCOs,
and all other spaces, which only admit trivial unitary WCOs.

Let $\Aut(\mathbb{B}_d)$ denote the group of biholomorphic automorphisms of $\mathbb{B}_d$.
WCOs on the spaces $\mathcal{H}_\gamma$ were already studied by Trieu Le \cite{Le12}, who essentially proved
the following result.

\begin{theorem}[Le]
  \label{thm:H_gamma_intro}
  Let $d \in \mathbb{N}$, let $\gamma \in (0,\infty)$ and let $\phi: \mathbb{B}_d \to \mathbb{B}_d$ and $\delta: \mathbb{B}_d \to \mathbb{C}$.
  Then the following are equivalent:
  \begin{enumerate}[label=\normalfont{(\roman*)}]
    \item $W_{\delta,\phi}$ is a unitary operator on $\mathcal{H}_\gamma$;
    \item $W_{\delta,\phi}$ is a co-isometric operator on $\mathcal{H}_\gamma$;
    \item $\phi \in \Aut(\mathbb{B}_d)$ and $\delta(z) = \mu \frac{K(z,a)}{K(a,a)^{1/2}}$,
      where $a = \phi^{-1}(0)$ and $\mu \in \mathbb{C}$ with $|\mu| = 1$.
  \end{enumerate}
\end{theorem}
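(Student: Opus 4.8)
Throughout, write $K_w=K(\cdot,w)$ for the reproducing kernels of $\mathcal{H}_\gamma$, and recall that whenever $W_{\delta,\phi}$ is a bounded operator on $\mathcal{H}_\gamma$ one has $W_{\delta,\phi}^*K_w=\overline{\delta(w)}\,K_{\phi(w)}$. I would also use the Möbius identity
\[
  1-\langle\varphi_a(z),\varphi_a(w)\rangle=\frac{(1-|a|^2)(1-\langle z,w\rangle)}{(1-\langle z,a\rangle)(1-\langle a,w\rangle)},
\]
valid for $\varphi_a\in\Aut(\mathbb{B}_d)$ the canonical involution with $\varphi_a(0)=a$, together with the factorization $\phi=U\varphi_a$ ($U$ unitary, $a=\phi^{-1}(0)$) of an arbitrary $\phi\in\Aut(\mathbb{B}_d)$. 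Since $\langle\phi(z),\phi(w)\rangle=\langle\varphi_a(z),\varphi_a(w)\rangle$ in this factorization, raising the Möbius identity to the power $-\gamma$ (the branch being fixed by positivity at the origin, where both sides equal $1-|a|^2$) yields, with $\delta_a(z):=K(z,a)\,K(a,a)^{-1/2}$, the kernel identity
\begin{equation*}
  \delta_a(z)\,\overline{\delta_a(w)}\,K(\phi(z),\phi(w))=K(z,w)\qquad(z,w\in\mathbb{B}_d).\tag{$\ast$}
\end{equation*}

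For (iii) $\Rightarrow$ (i), take $\delta=\mu\delta_a$ with $|\mu|=1$ and $a=\phi^{-1}(0)$. A Gram-matrix computation based on $(\ast)$ shows that the densely defined map $V\colon\sum_i c_iK_{w_i}\mapsto\sum_i c_i\overline{\delta(w_i)}K_{\phi(w_i)}$ is well defined and isometric on the span of kernels, and its range is dense because $\delta$ is zero-free and $\phi$ is a bijection of $\mathbb{B}_d$. Hence $V$ extends to a unitary $\widetilde V$ on $\mathcal{H}_\gamma$, and evaluating $\langle\widetilde V^*f,K_w\rangle=\langle f,\overline{\delta(w)}K_{\phi(w)}\rangle=\delta(w)f(\phi(w))$ for $f\in\mathcal{H}_\gamma$ identifies $\widetilde V^*$ with $W_{\delta,\phi}$; thus $W_{\delta,\phi}$ is unitary. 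The implication (i) $\Rightarrow$ (ii) is immediate.

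The heart of the matter is (ii) $\Rightarrow$ (iii). Assume $W=W_{\delta,\phi}$ is co-isometric, so $W^*$ is an isometry. From $\|K_w\|=\|W^*K_w\|=|\delta(w)|\,\|K_{\phi(w)}\|$ and the positivity of the norms involved, $\delta$ is zero-free; since $\delta$ (the image of the constant function $1$) and each $\delta\phi_j$ (the image of $z_j$) lie in $\mathcal{H}_\gamma$ and are therefore holomorphic, $\phi=(\delta\phi_j/\delta)_j$ is a holomorphic self-map of $\mathbb{B}_d$. The isometry of $W^*$ on kernels reads $\delta(z)\overline{\delta(w)}K(\phi(z),\phi(w))=K(z,w)$, equivalently
\[
  (1-\langle\phi(z),\phi(w)\rangle)^\gamma=\delta(z)\,\overline{\delta(w)}\,(1-\langle z,w\rangle)^\gamma .
\]
Put $b:=\phi(0)$. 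Specializing $w=0$ gives $\delta(z)\overline{\delta(0)}=(1-\langle\phi(z),b\rangle)^\gamma$; specializing $w=z$ gives $|\delta(z)|^{2/\gamma}=(1-|\phi(z)|^2)/(1-|z|^2)$, hence $|\delta(0)|^{2/\gamma}=1-|b|^2$. Taking moduli in the first relation, substituting the other two and rearranging, one obtains
\[
  \frac{(1-|\phi(z)|^2)(1-|b|^2)}{|1-\langle\phi(z),b\rangle|^2}=1-|z|^2 ,
\]
whose left-hand side is exactly $1-|\varphi_b(\phi(z))|^2$ by the Möbius identity. Thus the holomorphic self-map $\psi:=\varphi_b\circ\phi$ fixes $0$ and satisfies $|\psi(z)|=|z|$ for all $z$. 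A Schwarz-lemma argument then forces $\psi$ to be unitary: for a unit vector $v$, the map $g_v(\zeta):=\psi(\zeta v)/\zeta$ extends holomorphically to $\mathbb{D}$ with values in the unit sphere, so $\zeta\mapsto\langle g_v(\zeta),g_v(\zeta_0)\rangle$ is holomorphic, bounded by $1$ via Cauchy--Schwarz and equal to $1$ at $\zeta_0$, hence constant; this makes each $g_v$ constant, and expanding $\psi$ in homogeneous polynomials shows $\psi$ is linear, so $\psi=U$ with $U$ unitary. Therefore $\phi=\varphi_b\circ U\in\Aut(\mathbb{B}_d)$. Finally, with $a:=\phi^{-1}(0)=U^*b$ (so $|a|=|b|$) and using $1-\langle\phi(z),b\rangle=(1-|a|^2)(1-\langle z,a\rangle)^{-1}$ from the Möbius identity, the relation $\delta(z)=(1-\langle\phi(z),b\rangle)^\gamma/\overline{\delta(0)}$ becomes $\delta(z)=\mu\,K(z,a)\,K(a,a)^{-1/2}$ with $|\mu|=1$, since $|\delta(0)|^2=(1-|a|^2)^\gamma=K(a,a)^{-1}$. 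This is exactly (iii).

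I expect the main obstacle to be the step in (ii) $\Rightarrow$ (iii) that turns the kernel functional equation into the statement that $\phi$ is an automorphism; the key idea is to specialize the equation to $w=0$ and recognize the resulting scalar identity as the transformation rule of $1-|\cdot|^2$ under $\varphi_b$, after which the Schwarz lemma on $\mathbb{B}_d$ finishes the argument. A more routine but still delicate point is (iii) $\Rightarrow$ (i): one must produce $W_{\delta,\phi}$ as a genuine bounded---in fact unitary---operator rather than presuppose its boundedness, which is why the unitary is built from the kernel map and its adjoint is then identified with $W_{\delta,\phi}$.
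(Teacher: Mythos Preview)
Your proof is correct, but your route in (ii)$\Rightarrow$(iii) differs from the paper's. The paper first establishes (iii)$\Rightarrow$(i) (equivalently, that $W_{\psi,\varphi_a}$ with $\psi(z)=K(z,a)K(a,a)^{-1/2}$ is co-isometric), and then, given a co-isometric $W_{\delta,\phi}$ with $\phi(0)=b$, composes with $W_{\psi,\varphi_b}$ to reduce to the case where the composition symbol fixes the origin. In that case the kernel identity becomes $K(\phi(z),\phi(w))=K(z,w)$, and the paper's Lemma~\ref{lem:Schwarz_kernel} differentiates this identity at the origin to show $D\phi(0)$ is an isometry, after which the Schwarz lemma (Theorem~\ref{thm:Schwarz}) gives $\phi=D\phi(0)$. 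You instead work directly from the kernel identity: specializing at $w=0$ and $w=z$ and combining yields $|\varphi_b\circ\phi(z)|=|z|$, and you then use a sphere-valued maximum-modulus argument on $\zeta\mapsto\psi(\zeta v)/\zeta$ to force linearity. Your argument is more self-contained and exploits the explicit form of the $\mathcal{H}_\gamma$ kernel, while the paper's composition trick and Lemma~\ref{lem:Schwarz_kernel} are deliberately formulated for arbitrary unitarily invariant kernels (and for $d=\infty$), which is what the paper needs later for the non-$\mathcal{H}_\gamma$ spaces. For (iii)$\Rightarrow$(i), both you and the paper build the operator from the kernel identity $(\ast)$ via Lemma~\ref{lem:RKHS_WCO}(b); the paper then argues unitarity from injectivity (nonvanishing $\delta$ and surjective $\varphi_a$), which is equivalent to your density argument for the range of $V$.
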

This is essentially \cite[Corollary 3.6]{Le12}; see also Section \ref{sec:H_gamma} below.

Our main result shows that all other spaces are very rigid in the sense that they only admit
trivial unitary WCOs.
\begin{theorem}
  \label{thm:non_H_gamma_intro}
  Let $d \in \mathbb{N}$ and let $\phi: \mathbb{B}_d \to \mathbb{B}_d$ and $\delta: \mathbb{B}_d \to \mathbb{C}$.
  Let $\mathcal{H}$ be a unitarily invariant space with $\mathcal{H} \neq \mathcal{H}_\gamma$ for all $\gamma \in (0,\infty)$.
  Then the following are equivalent:
  \begin{enumerate}[label=\normalfont{(\roman*)}]
    \item $W_{\delta,\phi}$ is a unitary operator on $\mathcal{H}$;
    \item $W_{\delta,\phi}$ is a co-isometric operator on $\mathcal{H}$;
    \item $\phi$ is a unitary $d \times d$ matrix and $\delta$ is a unimodular constant.
  \end{enumerate}
\end{theorem}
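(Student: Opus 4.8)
The plan is to deduce the structure of $W_{\delta,\phi}$ from the way it must interact with the reproducing kernel, and then to exploit the rigidity coming from the fact that $\mathcal H \neq \mathcal H_\gamma$. First I would record the basic adjoint identity: if $W_{\delta,\phi}$ is bounded, then $W_{\delta,\phi}^* K_w = \overline{\delta(w)} K_{\phi(w)}$ for every $w \in \mathbb B_d$. Thus if $W_{\delta,\phi}$ is co-isometric, i.e.\ $W_{\delta,\phi} W_{\delta,\phi}^* = \id$, applying both sides to $K_w$ and pairing with $K_z$ gives the functional equation
\begin{equation*}
  \delta(z) \, \overline{\delta(w)} \, K(\phi(z), \phi(w)) = K(z,w) \qquad (z,w \in \mathbb B_d).
\end{equation*}
Conversely this identity forces $W_{\delta,\phi}^*$ to be isometric on the span of kernels, hence co-isometry of $W_{\delta,\phi}$; and it is standard that (i) and (ii) are equivalent here since a co-isometric WCO on a space of holomorphic functions is automatically invertible (one checks $W_{\delta,\phi}$ is injective because $\delta$ is not identically zero and $\phi$ is, as we will see, an automorphism, so it is a co-isometry with dense range, hence unitary). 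So the whole theorem reduces to analyzing that functional equation.

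Next I would extract information by specializing. Setting $z = w$ gives $|\delta(z)|^2 K(\phi(z),\phi(z)) = K(z,z)$, so $\delta$ is zero-free and $|\delta|^2 = K(z,z)/K(\phi(z),\phi(z))$. Setting $w = 0$ and using $K(\cdot,0) \equiv a_0 = 1$ gives $\delta(z)\overline{\delta(0)} K(\phi(z),\phi(0)) = 1$, which shows $\delta$ is holomorphic and nowhere zero, and expresses $\delta(z)$ in terms of $\phi$. Plugging this back, after writing $a := \phi(0)$, one gets an identity of the form
\begin{equation*}
  \frac{K(z,w)\, K(a,a)}{K(\phi(z),\phi(w))} = K(z,0)\,K(0,w) \cdot \frac{K(\phi(z),a)\,K(a,\phi(w))}{K(\phi(z),\phi(w))\,K(a,a)}^{-1},
\end{equation*}
but the cleaner route is: compose with an automorphism. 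Since $\mathcal H_\gamma$-type arguments suggest automorphisms are the relevant maps, I would first prove $\phi \in \Aut(\mathbb B_d)$. Here I can use that $|\delta|^2 = K(z,z)/K(\phi(z),\phi(z))$ is bounded (as $\delta \in \mathcal H \cdot$ something, or rather since $W_{\delta,\phi}$ bounded forces control) together with the fact that $K(z,z) \to \infty$ as $z \to \partial \mathbb B_d$ at a rate governed by $(a_n)$; comparing blow-up rates along $\phi$ should force $\phi$ to extend to a proper self-map and in fact to be an automorphism. Once $\phi \in \Aut(\mathbb B_d)$, I replace $W_{\delta,\phi}$ by $W_{\delta,\phi} W_{1,\psi}$ where $\psi \in \Aut(\mathbb B_d)$ is chosen with $\psi(0) = \phi^{-1}(0)$, reducing to the case $\phi(0) = 0$. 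But composition operators $W_{1,\psi}$ need not be bounded on a general $\mathcal H$! This is exactly where the hypothesis $\mathcal H \neq \mathcal H_\gamma$ bites, and it is the main obstacle.

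So the heart of the proof is the case $\phi(0) = 0$ with the functional equation $\delta(z)\overline{\delta(w)} K(\phi(z),\phi(w)) = K(z,w)$. Setting $w = 0$ now gives $\delta(z)\overline{\delta(0)} = 1$, so $\delta$ is a unimodular constant, WLOG $\delta \equiv 1$, and we are left with $K(\phi(z),\phi(w)) = K(z,w)$, i.e.\ $\phi$ is a kernel isometry fixing $0$. Expanding $K(z,w) = \sum a_n \langle z,w\rangle^n$ and using the Schur-type uniqueness of such expansions, $\langle \phi(z),\phi(w)\rangle^n$ must be expressible through $\langle z,w\rangle$ compatibly for all $n$ with $a_n > 0$. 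Since $a_1 > 0$, comparing the degree-one parts of $\phi(z) = Az + (\text{higher order})$ forces $\langle \phi(z),\phi(w)\rangle$ to have $\langle Az, Aw\rangle$ as its bilinear part; and if there is any $n \geq 2$ with $a_n > 0$ and $a_n \neq$ the "multiplicative continuation" value, we get a contradiction unless $\phi$ is linear and $A$ is unitary — which is precisely the statement that $\mathcal H$ is not one of the $\mathcal H_\gamma$ (for which $a_n$ is exactly the binomial sequence making $(1-\langle z,w\rangle)^{-\gamma}$ invariant under a larger automorphism group). Concretely, I would argue: the identity $K(\phi(z),\phi(w)) = K(z,w)$ together with $\phi(0)=0$ implies $\langle \phi(z),\phi(w)\rangle = \langle Uz,w\rangle$ for a fixed linear $U$ (look at the lowest-order term and bootstrap using that $K$ is a function of $\langle z,w\rangle$ alone that determines $\langle z,w\rangle$ on a neighborhood of $0$ whenever more than one $a_n$ is nonzero), whence $\phi(z) = Uz$ and unitarity of $U$ follows from $\|\phi(z)\|^2 = \|z\|^2$ on $\mathbb B_d$. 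The only way this degree-reduction can fail is the presence of an automorphism-rich kernel, i.e.\ $\mathcal H = \mathcal H_\gamma$, which is excluded. I expect the subtlety to be making the "$K$ determines $\langle z,w\rangle$" step precise — one needs that the map $t \mapsto \sum a_n t^n$ is injective near $0$, which holds as soon as at least two $a_n$ are positive, and then to handle the case where only $a_0, a_1$ are nonzero (the Drury--Arveson-like truncation), where the space is finite-dimensional-ish and the statement is easy directly.
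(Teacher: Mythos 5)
There are two genuine gaps, both at the place where the real work of the theorem happens. First, your argument that $\phi$ must be an automorphism is not a proof. You assert that $K(z,z) \to \infty$ as $z \to \partial \mathbb{B}_d$ "at a rate governed by $(a_n)$" and that comparing blow-up rates "should force" $\phi$ to be proper and then an automorphism. But the theorem covers spaces with \emph{bounded} kernel (e.g.\ $\sum_n a_n < \infty$), which are never of the form $\mathcal{H}_\gamma$, and there $K(z,z)$ does not blow up at all; this case needs a separate argument (the paper uses continuity of $\delta$ up to $\overline{\mathbb{B}_d}$ plus the maximum modulus principle to show $\delta$ is a unimodular constant, and then the Schwarz-type lemma to get $\phi$ linear). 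In the unbounded-kernel case, "comparing blow-up rates" is exactly the hard step: the paper must show $\phi(\mathbb{B}_d) = \mathbb{B}_d$ by proving $\partial \phi(\mathbb{B}_d) \cap \mathbb{B}_d = \emptyset$, and when the kernel has zeros this requires a genuinely nontrivial argument (boundary points of the image would have to lie on finitely many hyperplanes where $K(\cdot, \phi(0))$ vanishes, which is then excluded by a removable-singularity/maximum-modulus lemma for the biholomorphism $\phi^{-1}$). Also note that properness alone would not suffice: for $d=1$ proper self-maps are Blaschke products, and for $d \ge 2$ one would be invoking Alexander's theorem; injectivity of $\phi$ (which does follow from the kernel identity) is essential and you never use it.

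Second, the step where the hypothesis $\mathcal{H} \neq \mathcal{H}_\gamma$ actually enters is missing. You correctly observe that the reduction to $\phi(0)=0$ by composing with $W_{1,\psi}$, $\psi \in \Aut(\mathbb{B}_d)$, breaks down because such a composition operator need not be bounded on a general unitarily invariant space --- but you never repair it; you simply pass to the case $\phi(0)=0$, and in that case the conclusion ($\phi$ a linear isometry) holds for \emph{every} unitarily invariant space, including $\mathcal{H}_\gamma$, so the non-$\mathcal{H}_\gamma$ hypothesis is not used there at all. The closing claim that "the only way this degree-reduction can fail is $\mathcal{H} = \mathcal{H}_\gamma$" is precisely what has to be proved, not an argument. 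The paper's route is different: it shows that the set $S$ of automorphisms admitting a co-isometric WCO is a subsemigroup of $\Aut(\mathbb{B}_d)$ containing the unitary group; if it contained a non-unitary automorphism, then by maximality of the unitary group as a subsemigroup one gets $S = \Aut(\mathbb{B}_d)$, so the kernel transformation law holds for all automorphisms, and a known rigidity result then forces $\mathcal{H} = \mathcal{H}_\gamma$, a contradiction. Some mechanism of this kind (or an equivalent rigidity input) is indispensable, and your proposal contains none.
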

This theorem will be proved in Section \ref{sec:non-H_gamma}.

We will also study the case $d = \infty$, where $\mathbb{B}_d$ is interpreted as the open unit ball of $\ell^2$.
Our chief motivation is a theorem of Agler and \mcc,
which shows that the Drury--Arveson space on $\mathbb{B}_\infty$ (i.e.\ $\mathcal{H}_1$ in our notation) is a universal complete Pick space; see \cite{AM00} and \cite[Chapter 8]{AM02}.
When $d = \infty$, the function theory becomes more difficult, and so our results are less complete.
Nonetheless, the spaces $\mathcal{H}_\gamma$ are still tractable.
The fact that $\ell^2$ admits non-unitary isometries implies that not every co-isometric weighted composition operator on $\mathcal{H}_\gamma$ is unitary when $d = \infty$. Thus, Theorem \ref{thm:H_gamma_intro} now splits into two statements.
The equivalence between (i) and (iii) in Theorem \ref{thm:H_gamma_intro} continues to hold in case $d = \infty$.
On the other hand, co-isometric weighted composition operators can incorporate an additional linear isometry on $\ell^2$,
see Section \ref{sec:H_gamma} for details.

For non-$\mathcal{H}_\gamma$ spaces, we obtain the following rigidity result.

\begin{theorem}
  \label{thm:non_H_gamma_infinite_intro}
  Let $d = \infty$
  and let $\mathcal{H}$ be a unitarily invariant space with $\mathcal{H} \neq \mathcal{H}_\gamma$ for all $\gamma \in (0,\infty)$.
  Let $\phi: \mathbb{B}_\infty \to \mathbb{B}_\infty$ and $\delta: \mathbb{B}_\infty \to \mathbb{C}$ be such that $W_{\delta,\phi}$ is unitary
  on $\mathcal{H}$. If either $\phi \in \Aut(\mathbb{B}_\infty)$ or the reproducing kernel of $\mathcal{H}$ is bounded,
  then $\phi$ is a unitary.
\end{theorem}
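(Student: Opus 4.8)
The plan is to reduce both cases to the functional identity that every co-isometric weighted composition operator satisfies on reproducing kernels. Since $W_{\delta,\phi}$ is bounded, $\delta=W_{\delta,\phi}1$ and $\delta\phi_j=W_{\delta,\phi}z_j$ belong to $\mathcal H$, hence are holomorphic, and the standard adjoint formula $W_{\delta,\phi}^*K_w=\overline{\delta(w)}K_{\phi(w)}$ together with the fact that $W_{\delta,\phi}^*$ is isometric yields
\begin{equation}\tag{$\star$}
  \delta(z)\,\overline{\delta(w)}\,K(\phi(z),\phi(w)) = K(z,w) \qquad (z,w\in\mathbb B_\infty).
\end{equation}
Writing $K(z,w)=k(\langle z,w\rangle)$ with $k(t)=\sum_n a_n t^n$, we have $k(0)=1$ and, as $a_1>0$, $k$ is strictly increasing on $[0,1]$; putting $w=z$ gives $|\delta(z)|^2 k(\|\phi(z)\|^2)=k(\|z\|^2)\ge 1$, so $\delta$ is nowhere zero and $\phi_j=(\delta\phi_j)/\delta$, and hence $\phi$, are holomorphic on $\mathbb B_\infty$.

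In the automorphism case I would use the M\"obius decomposition $\phi=V\circ\psi_b$, where $\psi_b$ is the involutive automorphism interchanging $0$ and $b:=\phi^{-1}(0)$ and $V$ is a linear unitary (an automorphism fixing $0$ is a unitary), and the unitarity of $C_V$ on $\mathcal H$, to pass to the still-unitary operator $W_{\delta,\psi_b}=W_{\delta,\phi}\,C_V^{-1}$; since an automorphism fixing $0$ is a unitary, it then suffices to show $b=0$. If $b\ne 0$, set $M=\mathbb C b$, so that $\mathbb B_M:=M\cap\mathbb B_\infty$ is a disc on which $\psi_b$ restricts to an automorphism. By Aronszajn's restriction theorem the restriction map $R\colon\mathcal H\to\mathcal H_M:=\mathcal H|_{\mathbb B_M}$ is a co-isometry, and $\mathcal H_M$ is the unitarily invariant space on $\mathbb B_1$ with the same coefficient sequence $(a_n)$. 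A computation on reproducing kernels should show that $R\,W_{\delta,\psi_b}\,R^*$ is the weighted composition operator on $\mathcal H_M$ with symbols $\delta|_{\mathbb B_M}$ and $\psi_b|_{\mathbb B_M}$, and that it is unitary. Since the condition $\mathcal H=\mathcal H_\gamma$ only involves $(a_n)$, we have $\mathcal H_M\ne\mathcal H_\gamma$ for every $\gamma$, so Theorem \ref{thm:non_H_gamma_intro} applied in dimension one forces $\psi_b|_{\mathbb B_M}$ to fix the origin, contradicting $\psi_b(0)=b\ne 0$. Hence $b=0$, i.e.\ $\phi(0)=0$, so $\phi$ is a unitary.

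For the bounded-kernel case, write $S:=\sum_n a_n=k(1)<\infty$ (note that the hypothesis $\mathcal H\ne\mathcal H_\gamma$ is then automatic, since $\sum_n a_n=\infty$ for every $\mathcal H_\gamma$, and it is in fact not used). Then $|f(z)|\le\sqrt S\,\|f\|_{\mathcal H}$ on $\mathbb B_\infty$, and approximating by polynomials in finitely many variables shows $\mathcal H\subseteq A(\mathbb B_\infty)$; in particular $\delta$ and all $\delta\phi_j$ extend continuously to $\overline{\mathbb B_\infty}$, and since $\delta$ is bounded below (by $1/\sqrt S$) so are the coordinates $\phi_j$. From $|\delta(z)|^2 k(\|\phi(z)\|^2)=k(\|z\|^2)$ and $k(\|\phi(z)\|^2)<k(1)=S$ I obtain $|\delta(z)|^2>k(\|z\|^2)/S$, hence $|\delta|\ge 1$ on $\partial\mathbb B_\infty$; applying the maximum modulus principle along one-dimensional slices to the zero-free function $1/\delta$ (holomorphic on $\mathbb B_\infty$, continuous on $\overline{\mathbb B_\infty}$) gives $|\delta|\ge 1$ on all of $\mathbb B_\infty$, in particular $|\delta(0)|\ge 1$. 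But $(\star)$ at $z=w=0$ reads $|\delta(0)|^2 k(\|\phi(0)\|^2)=k(0)=1$, forcing $k(\|\phi(0)\|^2)\le k(0)$ and thus $\phi(0)=0$. With $\phi(0)=0$, $(\star)$ at $w=0$ gives $\delta\equiv\mu$ constant with $|\mu|=1$, so $(\star)$ becomes $k(\langle\phi(z),\phi(w)\rangle)=k(\langle z,w\rangle)$; taking $z=w$ and using injectivity of $k$ on $[0,1)$ yields $\|\phi(z)\|=\|z\|$ for all $z$, so the equality case of the Schwarz lemma on the ball of $\ell^2$ makes $\phi=V$ a linear isometry. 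Finally, if $V$ failed to be onto there would be a nonzero $e\in\ell^2$ with $V^*e=0$, and then the nonzero element $z\mapsto\langle z,e\rangle$ of $\mathcal H$ would lie in the kernel of $C_V$, hence of $W_{\mu,\phi}=\mu C_V$, contradicting its injectivity; so $V$ is a unitary.

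I expect the bounded-kernel case to be the main obstacle: it rests on infinite-dimensional versions of classical statements --- the inclusion $\mathcal H\subseteq A(\mathbb B_\infty)$, continuity of the $\phi_j$ on $\overline{\mathbb B_\infty}$, the maximum modulus principle and the equality case of the Schwarz lemma on $\mathbb B_\infty$ --- each routine in principle but requiring care because $\overline{\mathbb B_\infty}$ is not norm-compact. In the automorphism case the delicate point is the restriction-to-a-slice construction and checking that $R\,W_{\delta,\psi_b}\,R^*$ is a bona fide unitary weighted composition operator on $\mathcal H_M$, so that the already-proved one-dimensional result can be invoked.
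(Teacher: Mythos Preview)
Your bounded-kernel argument is essentially the paper's own (Proposition~\ref{prop:bounded_kernel}): the same continuous extension of $\delta$ to $\overline{\mathbb B_\infty}$, the same estimate $|\delta(z)|^2\ge K(z,z)/S$, the same maximum-modulus argument on one-dimensional slices for $1/\delta$, and the same endgame via Lemma~\ref{lem:WCO_with_iso}. One simplification: once $\delta$ is a unimodular constant you have the full identity $K(\phi(z),\phi(w))=K(z,w)$, which is precisely the hypothesis of Lemma~\ref{lem:Schwarz_kernel}; there is no need to pass through $\|\phi(z)\|=\|z\|$ and a separate equality-case Schwarz lemma.

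Your automorphism case, however, takes a genuinely different route. The paper (Proposition~\ref{prop:non_h_gamma_auto}) argues structurally: the automorphisms occurring as composition symbols of co-isometric WCOs form a subsemigroup of $\Aut(\mathbb B_\infty)$ containing the unitaries, so if it contained a non-unitary element it would equal $\Aut(\mathbb B_\infty)$ by the maximal-subsemigroup property (Proposition~\ref{prop:max_subsemigroup}), and then \cite[Proposition~4.3]{Hartz17a} forces $\mathcal H=\mathcal H_\gamma$. You instead restrict to the one-dimensional slice $M=\mathbb C b$ and invoke the already-established finite-dimensional Theorem~\ref{thm:non_H_gamma_intro} for $d=1$. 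This works: since $\psi_b$ preserves $\mathbb B_M$ and the restricted kernel is $K^M=K|_{\mathbb B_M\times\mathbb B_M}$, the identity $(\star)$ on $\mathbb B_M$ is exactly the co-isometry condition (Lemma~\ref{lem:RKHS_WCO}(b)) for $W_{\delta|_M,\psi_b|_M}$ on $\mathcal H_M$, so co-isometry---not unitarity---of $RW_{\delta,\psi_b}R^*$ is all you need, and it is immediate. Your approach avoids the subsemigroup machinery and the external reference, at the cost of relying on the one-variable case of Theorem~\ref{thm:non_H_gamma_intro}; the paper's approach is independent of that theorem but imports more structure.
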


This result is a combination of Proposition \ref{prop:non_h_gamma_auto} and Proposition \ref{prop:bounded_kernel}.
It remains open if Theorem \ref{thm:non_H_gamma_infinite_intro} holds for spaces with unbounded kernel
without the a priori assumption $\phi \in \Aut(\mathbb{B}_\infty)$.

\subsection*{Acknowledgements} The authors are grateful to Gadadhar Misra and Dmitry Yakubovich for pointing out the connection to homogeneous operator tuples and for providing relevant references. The authors are also very grateful to Frej Dahlin for alerting them to a gap in a previous version of the proof of Lemma \ref{lem:unbounded}.

\section{Preliminaries}

\subsection{Reproducing kernels}
A \emph{reproducing kernel Hilbert space} of functions on a set $X$ is a Hilbert space $\mathcal{H}$,
consisting of functions on $X$, such that for all $x \in X$, the point evaluation functional
\begin{equation*}
  \mathcal{H} \to \mathbb{C}, \quad f \mapsto f(x),
\end{equation*}
is bounded. The \emph{reproducing kernel} of $\mathcal{H}$ is the unique function
$K: X \times X \to \mathbb{C}$ with the property that $K_x := K(\cdot,x) \in \mathcal{H}$
and
\begin{equation*}
  \langle f, K_x \rangle = f(x)
\end{equation*}
for all $f \in \mathcal{H}, x \in X$.
For general background on RKHS, the reader is refered to \cite{PR16}.

Let $\mathcal{K}$ be another reproducing kernel Hilbert space on a set $Y$ with reproducing kernel
$L$. Given $\phi: Y \to X$ and $\delta: Y \to \mathbb{C}$, we consider the weighted composition
operator
\begin{equation*}
  W_{\delta,\phi}: \mathcal{H} \to \mathcal{K}, \quad f \mapsto \delta \cdot (f \circ \phi).
\end{equation*}
In general, this operator need not be well defined, since the function on the right need not belong to $\mathcal{K}$.
We are interested in the case when it is. In particular, when we say that $W_{\delta,\phi}$ is bounded/co-isometric/unitary, this in particular includes the assumption that $W_{\delta,\phi}$ is well defined.
We call $\delta$ the \emph{multiplication symbol} and $\phi$ the \emph{composition symbol} of $W_{\delta,\phi}$.

The following general result about WCOs in RKHS is well known. For completeness, and since we do not have a good reference
for the precise statement, we provide the short proof.

\begin{lemma}
  \label{lem:RKHS_WCO}
  In the setting above, the following statements hold:
  \begin{enumerate}[label=\normalfont{(\alph*)}]
    \item 
    If $W_{\delta,\phi}: \mathcal{H} \to \mathcal{K}$ is bounded, then
    \begin{equation*}
      W_{\delta,\phi}^* L_y = \overline{\delta(y)} K_{\phi(y)}
    \end{equation*}
    for all $y \in Y$.
  \item The operator $W_{\delta,\phi}: \mathcal{H} \to \mathcal{K}$ is a co-isometry
    if and only if
    \begin{equation}
      \label{eqn:co_iso}
      L(y_1,y_2) = \delta(y_1) \overline{\delta(y_2)} K(\phi(y_1),\phi(y_2))
    \end{equation}
    for all $y_1,y_2 \in Y$.
  \end{enumerate}
\end{lemma}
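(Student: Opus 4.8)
The plan is to prove Lemma \ref{lem:RKHS_WCO} by reducing everything to the standard adjoint computation on kernel functions and then exploiting the fact that the kernel functions span a dense subspace.

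For part (a), the key identity is that for $f \in \mathcal{H}$ and $y \in Y$,
\begin{equation*}
  \langle f, W_{\delta,\phi}^* L_y \rangle_{\mathcal{H}} = \langle W_{\delta,\phi} f, L_y \rangle_{\mathcal{K}} = (W_{\delta,\phi} f)(y) = \delta(y) f(\phi(y)) = \delta(y) \langle f, K_{\phi(y)} \rangle_{\mathcal{H}} = \langle f, \overline{\delta(y)} K_{\phi(y)} \rangle_{\mathcal{H}}.
\end{equation*}
Here the second equality uses the reproducing property in $\mathcal{K}$, the third uses the definition of $W_{\delta,\phi}$, and the fourth uses the reproducing property in $\mathcal{H}$. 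Since $f \in \mathcal{H}$ was arbitrary, we conclude $W_{\delta,\phi}^* L_y = \overline{\delta(y)} K_{\phi(y)}$. The only subtlety worth a remark is that the computation implicitly uses boundedness of $W_{\delta,\phi}$ to guarantee that $W_{\delta,\phi}^*$ is defined as a bounded operator; this is assumed in the hypothesis.

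For part (b), recall that a bounded operator $T: \mathcal{H} \to \mathcal{K}$ is a co-isometry precisely when $T T^* = \id_{\mathcal{K}}$, equivalently $\|T^* g\|_{\mathcal{H}} = \|g\|_{\mathcal{K}}$ for all $g \in \mathcal{K}$, equivalently (by polarization) $\langle T^* g_1, T^* g_2 \rangle_{\mathcal{H}} = \langle g_1, g_2 \rangle_{\mathcal{K}}$ for all $g_1, g_2 \in \mathcal{K}$. I would first handle the forward direction: assuming $W_{\delta,\phi}$ is a co-isometry, apply this with $g_i = L_{y_i}$ and use part (a) together with the reproducing property to get
\begin{equation*}
  L(y_1,y_2) = \langle L_{y_2}, L_{y_1} \rangle_{\mathcal{K}} = \langle W_{\delta,\phi}^* L_{y_2}, W_{\delta,\phi}^* L_{y_1} \rangle_{\mathcal{H}} = \langle \overline{\delta(y_2)} K_{\phi(y_2)}, \overline{\delta(y_1)} K_{\phi(y_1)} \rangle_{\mathcal{H}} = \delta(y_1) \overline{\delta(y_2)} K(\phi(y_1), \phi(y_2)),
\end{equation*}
which is \eqref{eqn:co_iso}. (One must be slightly careful about which slot is conjugate-linear; I will follow the paper's convention that $\langle f, K_x \rangle = f(x)$, so $K(z,w) = \langle K_w, K_z \rangle$.)

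For the converse, the main obstacle — though still routine — is that \eqref{eqn:co_iso} is only an identity about kernel functions, so I must first check that it even implies $W_{\delta,\phi}$ is well defined and bounded before I can talk about its adjoint. Here I would invoke the standard characterization: \eqref{eqn:co_iso} says that the map $K_{\phi(y)} \mapsto \frac{1}{\overline{\delta(y)}} L_y$ (on the set of $y$ with $\delta(y) \neq 0$; note \eqref{eqn:co_iso} forces $L(y,y) = |\delta(y)|^2 K(\phi(y),\phi(y))$, and since $L(y,y) = \|L_y\|^2 > 0$ we get $\delta(y) \neq 0$ for all $y$, so this is harmless) extends to an isometry $V$ from $\overline{\operatorname{span}}\{K_{\phi(y)} : y \in Y\} \subseteq \mathcal{H}$ onto $\mathcal{K}$ — this is exactly the content of \eqref{eqn:co_iso} via the usual Gram matrix / kernel comparison argument (see e.g. \cite{PR16}). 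Then $V^*: \mathcal{K} \to \mathcal{H}$ is a co-isometry, and I would verify that $V^* = W_{\delta,\phi}$ by checking they agree on the dense set of kernel functions $L_y$: indeed $V^* L_y = \overline{\delta(y)} K_{\phi(y)}$ by definition of $V$, and then for any $f \in \mathcal{H}$, $(V^* f)(y) = \langle V^* f, L_y\rangle_{\mathcal K}$... wait, I have the direction backwards; rather, $\langle f, V^* L_y \rangle = \langle f, \overline{\delta(y)}K_{\phi(y)}\rangle = \delta(y) f(\phi(y))$, so $\langle V f', L_y\rangle = \delta(y)(\text{stuff})$ — cleaner is: since $V$ is determined by $V K_{\phi(y)} = \frac{1}{\overline{\delta(y)}} L_y$, a direct computation gives that for $f \in \mathcal{H}$ and any $y$, $(V f)(y) = \langle Vf, L_y\rangle_{\mathcal K} = \langle f, V^* L_y\rangle_{\mathcal H}$, and one identifies $V^* L_y = \overline{\delta(y)} K_{\phi(y)}$ from the isometry relation, whence $(Vf)(y) = \delta(y) f(\phi(y))$, i.e. $V = W_{\delta,\phi}$ is well defined and bounded. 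Its adjoint $V^*$ — wait, $V$ itself maps into $\mathcal K$; $V$ is the co-isometry here since $V$ restricted to its domain is onto $\mathcal K$ isometrically means... I will be careful in the writeup to take $W_{\delta,\phi} = V^{\dagger}$ appropriately so that $W_{\delta,\phi} W_{\delta,\phi}^* = \id_{\mathcal K}$. Once well-definedness and boundedness are in hand, the identity \eqref{eqn:co_iso} combined with part (a) and the fact that $\{L_y\}$ has dense span in $\mathcal{K}$ gives $\|W_{\delta,\phi}^* g\| = \|g\|$ for all $g$ in a dense subspace, hence for all $g \in \mathcal{K}$ by continuity, so $W_{\delta,\phi}$ is a co-isometry.
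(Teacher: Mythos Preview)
Your proof of part (a) and the forward direction of (b) match the paper's argument exactly. For the converse of (b), you end up in the right place but take a detour: you define the isometry $V$ in the direction $\overline{\operatorname{span}}\{K_{\phi(y)}\} \to \mathcal{K}$ via $K_{\phi(y)} \mapsto \overline{\delta(y)}^{-1} L_y$, which forces you to argue $\delta(y)\neq 0$ and then untangle which of $V$, $V^*$ is the co-isometry. The paper instead defines the isometry in the opposite direction, $V: \mathcal{K}\to\mathcal{H}$ with $V L_y = \overline{\delta(y)} K_{\phi(y)}$; since the $L_y$ span $\mathcal{K}$, this is automatically everywhere-defined, no division is needed, and one directly reads off $V^* = W_{\delta,\phi}$ from the same computation as in (a). Functionally the two routes coincide, but the paper's choice of direction removes the back-and-forth in your draft. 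One small caveat: your justification that $\delta(y)\neq 0$ relies on $L(y,y)>0$, which need not hold in a general RKHS (point evaluations can be zero); it does hold in the paper's setting since $1\in\mathcal{K}$, but the paper's argument sidesteps the issue entirely.
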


\begin{proof}
  (a) For all $f \in \mathcal{H}$, we have
  \begin{equation*}
    \langle f, W_{\delta,\phi}^* L_y \rangle = \langle \delta (f \circ \phi), L_y \rangle
    = \delta(y) f(\phi(y))
    = \langle f, \overline{\delta(y)} K_{\phi(y)} \rangle,
  \end{equation*}
  which gives (a).

  (b) Assume that $W_{\delta,\phi}$ is a co-isometry. Then using part (a)
  in the third step,
  \begin{align*}
    L(y_1,y_2) = \langle L_{y_2}, L_{y_1} \rangle
    = \langle W_{\delta,\phi}^* L_{y_2}, W_{\delta,\phi}^* L_{y_1} \rangle 
    &= \delta(y_1) \overline{\delta(y_2)}
    \langle K_{\phi(y_2)}, K_{\phi(y_1)} \rangle  \\
    &= \delta(y_1) \overline{\delta(y_2)}
    K(\phi(y_1),\phi(y_2)),
  \end{align*}
  hence \eqref{eqn:co_iso} holds.

  Conversely, if \eqref{eqn:co_iso} holds, then the computation above shows that
  \begin{equation*}
    \langle L_{y_2}, L_{y_1} \rangle = \langle \overline{\delta(y_2)} K_{\phi(y_2)},
  \overline{\delta(y_1)} K_{\phi(y_1)} \rangle 
  \end{equation*}
  for all $y_1,y_2 \in Y$.
  Since the linear span of the reproducing kernels is dense in $\mathcal{K}$,
  there exists a unique linear isometry $V: \mathcal{K} \to \mathcal{H}$ such that
  \begin{equation*}
    V L_{y} = \overline{\delta(y)} K_{\phi(y)}
  \end{equation*}
  for all $y \in Y$. A computation as in (a) then yields that
  \begin{equation*}
    V^* f = \delta (f \circ \phi)
  \end{equation*}
  for all $f \in \mathcal{H}$; whence $W_{\delta,\phi}: \mathcal{H} \to \mathcal{K}$ is a (well-defined) co-isometry.
 \end{proof}

If $W_{\delta_1,\phi_1}: \mathcal{H}_1 \to \mathcal{H}_2$ and $W_{\delta_2,\phi_2}: \mathcal{H}_2 \to \mathcal{H}_3$ are two bounded weighted composition operators, then for all $f \in \mathcal{H}_1$, we have
\begin{equation}
  \label{eqn:product_WCO}
  W_{\delta_2,\phi_2} W_{\delta_1,\phi_1} f =
  \delta_2 (\delta_1 \circ \phi_2) (f \circ \phi_1 \circ \phi_2),
\end{equation}
hence the product $W_{\delta_2,\phi_2} W_{\delta_1,\phi_1}$ is another weighted composition operator,
with multiplication symbol $\delta_2 (\delta_1 \circ \phi_2)$ and composition symbol $\phi_1 \circ \phi_2$.

As mentioned in the introduction, we will consider reproducing kernel Hilbert spaces
on the Euclidean unit ball $\mathbb{B}_d$ in $\mathbb{C}^d$ with reproducing kernel of the form
\begin{equation*}
  K(z,w) = \sum_{n=0}^\infty a_n \langle z,w \rangle^n,
\end{equation*}
where $a_0 = 1, a_1 > 0$ and $a_n \ge 0$ for all $n \ge 2$.
We also allow $d = \infty$, in which case $\mathbb{C}^\infty$ is understood as $\ell^2$.
While do not require this, another useful description is that the set of monomials
\begin{equation*}
  \{ z^\alpha: a_{|\alpha|} \neq 0 \}
\end{equation*}
forms an orthogonal basis of $\mathcal{H}$ with
\begin{equation*}
\|z^\alpha\|^2 = \frac{\alpha!}{a_{|\alpha|} |\alpha|!}
\end{equation*}
Here, $\alpha$ is a multi-index with finitely many non-zero entries.
See for instance \cite[Proposition 4.1]{GHX04}.

\subsection{Holomorphic functions}
Since we will also consider the case $d = \infty$, we briefly have to discuss infinite dimensional holomorphy.
Fortunately, we only require elementary parts of the theory.
More background information can for instance be found in \cite{Chae85,Dineen81,Mujica86}.
Let $\mathcal{E}$ and $\mathcal{F}$ be Hilbert spaces
and let $U \subset \mathcal{E}$ be open. We say that a function $f: U \to \mathcal{F}$ is holomorphic
if $f$ is locally bounded and for all $z \in U, \xi \in \mathcal{E}$ and $\eta \in \mathcal{F}$, the function
of one complex variable
\begin{equation*}
  \lambda \mapsto \langle f(z + \lambda \xi), \eta \rangle
\end{equation*}
is holomorphic in a neighborhood of $0$; see for instance \cite[Chapter 2]{Dineen81}, \cite[Section II.8]{Mujica86} or  \cite[Chapter 14]{Chae85} for how this definition is equivalent to various other ones.
In the finite dimensional setting, this reduces to the usual notion from several complex variables.

\begin{proposition}
  \label{prop:H_holo}
  Let $d \in \mathbb{N} \cup \{\infty\}$ and let $\mathcal{H}$
  be a unitarily invariant space on $\mathbb{B}_d$.
  Then every $f \in \mathcal{H}$ is holomorphic on $\mathbb{B}_d$.
\end{proposition}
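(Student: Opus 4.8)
The plan is to deduce holomorphy of an arbitrary $f \in \mathcal{H}$ from holomorphy of the reproducing kernel together with the expansion of $f$ in the monomial orthogonal basis. First I would observe that the kernel $K(z,w) = \sum_{n=0}^\infty a_n \langle z,w \rangle^n$ is holomorphic in $z$ (for fixed $w \in \mathbb{B}_d$) on $\mathbb{B}_d$: the series converges locally uniformly on $\mathbb{B}_d$ by Cauchy--Schwarz, $|\langle z,w\rangle| \le \|z\| \|w\| < 1$, and each partial sum is a polynomial in the inner product, hence holomorphic in the sense recalled above; a locally uniform limit of holomorphic functions is holomorphic (this follows from the one-variable case applied to the slice functions $\lambda \mapsto \langle f(z+\lambda\xi),\eta\rangle$, together with preservation of local boundedness). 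Consequently each $K_w = K(\cdot, w)$ lies in $\mathcal{H}$ and is holomorphic.

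Next, for general $f \in \mathcal{H}$, I would use the reproducing property to write $f(z) = \langle f, K_z \rangle$. The idea is to fix $z_0 \in \mathbb{B}_d$ and $\xi \in \mathcal{E}$ with $z_0 + \lambda\xi \in \mathbb{B}_d$ for $\lambda$ in a neighbourhood $V$ of $0$, and to show $\lambda \mapsto f(z_0 + \lambda \xi) = \langle f, K_{z_0 + \overline{\lambda}\xi}\rangle$ — careful, conjugation — is holomorphic in $\lambda$, and that $f$ is locally bounded. Local boundedness is immediate from $|f(z)| = |\langle f, K_z\rangle| \le \|f\| \, K(z,z)^{1/2}$ and local boundedness of $z \mapsto K(z,z)$. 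For holomorphy of the slice function, the cleanest route is to expand $f$ in the orthogonal monomial basis: write $f = \sum_\alpha c_\alpha z^\alpha$ converging in $\mathcal{H}$-norm, note that each homogeneous part $f_n := \sum_{|\alpha| = n} c_\alpha z^\alpha$ is a (finite-degree, when $d < \infty$; but possibly infinite when $d = \infty$) polynomial, so the issue at $d = \infty$ is whether $f_n$ itself is holomorphic and whether $\sum_n f_n$ converges locally uniformly on $\mathbb{B}_d$. Both follow by comparing with the kernel: $\|f_n\|^2 = \sum_{|\alpha|=n} |c_\alpha|^2 \|z^\alpha\|^2$, and for $\|z\| = r < 1$ one gets $|f_n(z)| \le \|f_n\| \big(\sum_{|\alpha|=n} \|z^\alpha\|^{-2} |z^\alpha|^2\big)^{1/2} = \|f_n\| \, (a_n)^{1/2} r^n \cdot (\text{multinomial})^{1/2}$, which after summing the multinomial coefficients gives $|f_n(z)| \le \|f_n\| \, (a_n r^{2n})^{1/2}$. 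Since $\sum_n a_n r^{2n} = K(z,z) < \infty$ for $r<1$ and $\sum_n \|f_n\|^2 = \|f\|^2 < \infty$, Cauchy--Schwarz yields $\sum_n \|f_n\|_{\infty, r\mathbb{B}_d} < \infty$, i.e.\ local uniform convergence of $\sum f_n$ to $f$.

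It then remains to check that each homogeneous piece $f_n$ is holomorphic on $\mathbb{B}_d$, and here I would again invoke that $f_n \in \mathcal{H}$ with $f_n(z) = \langle f_n, K_z\rangle$; restricting to a slice, $\lambda \mapsto f_n(z_0 + \lambda\xi)$ is, by the above bound applied to $f_n - (\text{truncation of its monomial expansion})$, a locally uniform limit of genuine polynomials in $\lambda$, hence holomorphic. Therefore $f = \sum_n f_n$ is a locally uniform limit of holomorphic functions, hence holomorphic, completing the proof. The main obstacle is the infinite-dimensional bookkeeping: when $d = \infty$ a "homogeneous polynomial" need not be a finite sum, so one cannot simply say "polynomials are holomorphic," and one must genuinely run the local-uniform-limit argument twice (once within each degree $n$, once over $n$), with the comparison estimate against $K(z,z)$ doing the work both times; keeping the conjugation in $K_{z_0 + \overline\lambda \xi}$ straight and confirming the local boundedness hypothesis throughout are the points requiring care.
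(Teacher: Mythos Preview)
Your argument is correct, but it takes a longer route than the paper's. The paper observes, exactly as you do, that each kernel function $K_w$ is holomorphic and that the Cauchy--Schwarz inequality gives $|f(z)| \le \|f\|\, K(z,z)^{1/2}$. From the latter estimate the paper immediately concludes that convergence in $\mathcal{H}$ implies locally uniform convergence on $\mathbb{B}_d$; since finite linear combinations of the $K_w$ are dense in $\mathcal{H}$, every $f \in \mathcal{H}$ is a locally uniform limit of holomorphic functions, and the proof is done in three lines.

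Your approach instead expands $f$ in the monomial orthogonal basis, groups into homogeneous components $f_n$, and runs a two-stage approximation (first within each degree, then over degrees) with explicit multinomial estimates. This works, and your estimate $|f_n(z)| \le \|f_n\|\,(a_n)^{1/2} r^n$ is correct, but it is exactly what the single Cauchy--Schwarz bound already encodes: applying $|g(z)| \le \|g\|\,K(z,z)^{1/2}$ to $g = f - \sum_{\text{finite}} \langle f, K_{w_j}\rangle K_{w_j}$ (or to any tail of an $\mathcal{H}$-convergent approximation) gives locally uniform convergence without singling out the monomial basis. The paper's approach buys brevity and avoids the infinite-dimensional bookkeeping you flag as the main obstacle; your approach buys an explicit rate of convergence tied to the coefficients $a_n$, which is not needed here.

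One small slip: the identity is $f(z_0 + \lambda\xi) = \langle f, K_{z_0 + \lambda\xi}\rangle$, with no $\overline{\lambda}$. The map $\lambda \mapsto K_{z_0+\lambda\xi}$ is anti-holomorphic into $\mathcal{H}$, and the inner product is conjugate-linear in the second slot, so the composite is holomorphic in $\lambda$---this would in fact have given you a third valid route, but you abandoned it, so the slip is harmless.
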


\begin{proof}
  If $f \in \mathcal{H}$ and $r \in [0,1)$, then by the Cauchy--Schwarz inequality,
  \begin{equation}
    \label{eqn:CS_Hilbert_estimate}
    |f(z)| \le \|f\| K(z,z)^{1/2} \le \|f\| K(r e_1, r e_1)^{1/2} \quad \text{ for all } z \in r \mathbb{B}_d.
  \end{equation}
  Hence $f$ is locally bounded. Moreover, $K(\cdot,w)$ is holomorphic
  for each $w \in \mathbb{B}_d$.
  Inequality \eqref{eqn:CS_Hilbert_estimate} also shows that convergence in $\mathcal{H}$
  implies locally uniform convergence on $\mathbb{B}_d$.
  Since the linear span of the kernel functions is dense in $\mathcal{H}$,
  it follows that each $f \in \mathcal{H}$ is holomorphic.
\end{proof}

If $f: \mathbb{B}_d \to \mathcal{F}$ is holomorphic, then there exists a bounded linear operator
$Df (0): \mathbb{C}^d \to \mathcal{F}$ such that for all $\xi \in \mathbb{C}^d$ and $\eta \in \mathcal{F}$, we have
\begin{equation*}
  \frac{d}{d \lambda} \langle f( \lambda \xi), \eta \rangle \Big|_{\lambda = 0}
  = \langle D f(0) \xi, \eta \rangle.
\end{equation*}
The operator $D f(0)$ is the Fr\'echet derivative of $f$ at $0$, which always exists for holomorphic functions; see for instance \cite[Theorem 13.16]{Mujica86} or \cite[Theorem 14.13]{Chae85}.

We require the following version of the Schwarz lemma.

\begin{theorem}
  \label{thm:Schwarz}
  Let $f: \mathbb{B}_d \to \mathbb{B}_{d'}$ be holomorphic with $f(0) = 0$. Then $Df(0)$ has operator
  norm at most $1$. If $Df(0)$ is an isometry, then $f(z) = Df(0) z$ for all $z \in \mathbb{B}_d$.
\end{theorem}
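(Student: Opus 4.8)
The plan is to reduce the statement to the classical one-variable Schwarz lemma by restricting $f$ to complex lines through the origin. First I would fix unit vectors $\xi \in \mathbb{C}^d$ and $\eta \in \mathbb{C}^{d'}$ and consider the function
\[
  g(\lambda) = \langle f(\lambda \xi), \eta \rangle, \qquad \lambda \in \mathbb{B}_1 .
\]
By the definition of holomorphy, $g$ is holomorphic on the unit disc; it vanishes at $0$ because $f(0) = 0$, and $|g(\lambda)| \le \|f(\lambda \xi)\|\,\|\eta\| \le 1$ since $f$ maps into $\mathbb{B}_{d'}$. Hence $|g'(0)| \le 1$ by the classical Schwarz lemma. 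Since $g'(0) = \langle Df(0)\xi, \eta \rangle$ by the description of the Fr\'echet derivative recalled above, taking the supremum over all unit vectors $\xi$ and $\eta$ gives $\|Df(0)\| \le 1$.

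For the rigidity statement, assume $Df(0)$ is an isometry. Fix a unit vector $\xi$ and apply the previous argument with $\eta := Df(0) \xi$, which is again a unit vector. Now $g'(0) = \langle Df(0)\xi, Df(0)\xi\rangle = 1$, so the equality case of the one-variable Schwarz lemma forces $g(\lambda) = \lambda$, that is,
\[
  \langle f(\lambda \xi), Df(0)\xi \rangle = \lambda \qquad (\lambda \in \mathbb{B}_1).
\]
Separately, for any unit vector $\zeta \in \mathbb{C}^{d'}$ the function $\lambda \mapsto \langle f(\lambda \xi), \zeta \rangle$ is holomorphic on $\mathbb{B}_1$, vanishes at $0$ and is bounded by $1$, so the classical Schwarz lemma gives $|\langle f(\lambda \xi), \zeta\rangle| \le |\lambda|$; taking the supremum over $\zeta$ yields $\|f(\lambda\xi)\| \le |\lambda|$. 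Combining this with the displayed identity and the Cauchy--Schwarz inequality,
\[
  |\lambda| = |\langle f(\lambda\xi), Df(0)\xi\rangle| \le \|f(\lambda\xi)\|\,\|Df(0)\xi\| = \|f(\lambda\xi)\| \le |\lambda|,
\]
so equality holds throughout. In particular Cauchy--Schwarz is an equality, so $f(\lambda\xi)$ is a scalar multiple of $Df(0)\xi$, and comparing inner products with $Df(0)\xi$ identifies the scalar as $\lambda$, i.e.\ $f(\lambda\xi) = \lambda\, Df(0)\xi$. This holds for every unit vector $\xi$ and every $\lambda \in \mathbb{B}_1$; writing an arbitrary nonzero $z \in \mathbb{B}_d$ as $z = \|z\| \cdot (z/\|z\|)$ and using the identity with $\lambda = \|z\|$ gives $f(z) = \|z\|\, Df(0)(z/\|z\|) = Df(0) z$, and the case $z = 0$ is trivial.

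I do not expect a serious obstacle: the whole proof is a line-restriction argument combined with the (textbook) one-variable Schwarz lemma, and the possibly infinite-dimensional domain and target cause no trouble because everything is tested against unit vectors and reduces to one complex variable. The one place that needs a small idea beyond a routine reduction is the final step, where the scalar identity $\langle f(\lambda\xi), Df(0)\xi\rangle = \lambda$ must be upgraded to the vector identity $f(\lambda\xi) = \lambda\, Df(0)\xi$; this is exactly where the norm bound $\|f(\lambda\xi)\| \le |\lambda|$ (itself a consequence of the same slicing trick) and the equality case of Cauchy--Schwarz enter.
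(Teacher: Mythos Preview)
Your argument is correct: the slicing reduction to the one-variable Schwarz lemma gives $\|Df(0)\|\le 1$, and in the equality case your combination of the identity $\langle f(\lambda\xi),Df(0)\xi\rangle=\lambda$ with the norm bound $\|f(\lambda\xi)\|\le|\lambda|$ and the equality case of Cauchy--Schwarz cleanly upgrades the scalar statement to $f(\lambda\xi)=\lambda\,Df(0)\xi$. The paper does not actually supply a proof here---it only cites Rudin's book and infinite-dimensional references (Harris, Dineen, Chae)---but your line-restriction argument is exactly the standard one behind those references, so there is no methodological difference to discuss.
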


\begin{proof}
  The proof in Rudin's book in the finite dimensional setting \cite[Chapter 8]{Rudin08} carries
  over almost verbatim. See also \cite{Harris69}, \cite[Theorem 4.3]{Dineen81} or \cite[Theorem 13.13]{Chae85} for more general versions of the Schwarz lemma
  in the infinite dimensional setting.
\end{proof}

We also need some basic facts about automorphisms of $\mathbb{B}_d$.
A treatment in the finite dimensional setting can be found in Rudin's book \cite[Chapter 2]{Rudin08};
the facts we need carry over to $d = \infty$, see also \cite{HS71} for $d = \infty$.

A biholomorphic
automorphism of $\mathbb{B}_d$ is a bijective holomorphic map $\phi: \mathbb{B}_d \to \mathbb{B}_d$
such that the inverse $\phi^{-1}$ is also holomorphic.
We write $\Aut(\mathbb{B}_d)$ for the group of biholomorphic automorphisms of $\mathbb{B}_d$.
Every unitary on $\mathbb{C}^d$ yields an element of $\Aut(\mathbb{B}_d)$.
Moreover, for each $a \in \mathbb{B}_d$, there is an involution $\varphi_a \in \Aut(\mathbb{B}_d)$
mapping $0$ to $a$. If $P_a$ denotes the orthogonal projection onto the linear span of $a$,
$Q_a = I - P_a$ and $s_a = (1 - \|a\|^2)^{1/2}$, then $\varphi_a$ is given by
\begin{equation*}
  \varphi_a(z) = \frac{a - P_a z - s_a Q_a z}{1 - \langle z,a \rangle }.
\end{equation*}
With these definitions, we have the following well-known description of $\Aut(\mathbb{B}_d)$.
Indeed, with the automorphisms $\varphi_a$ in hand, this can be deduced from the Schwarz lemma (Theorem \ref{thm:Schwarz}).

\begin{theorem}
  \label{thm:auto_group}
  Let $d \in \mathbb{N} \cup \{\infty\}$. Then
  \begin{equation*}
    \Aut(\mathbb{B}_d) = \{ U \circ \varphi_a: U \text{ unitary }, a \in \mathbb{B}_d \}.
  \end{equation*}
\end{theorem}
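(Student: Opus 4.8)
The plan is to prove the two inclusions separately, with "$\supseteq$" being immediate. Every unitary $U$ on $\mathbb{C}^d$ restricts to a biholomorphic automorphism of $\mathbb{B}_d$ (with holomorphic inverse $U^*$), and each $\varphi_a$ lies in $\Aut(\mathbb{B}_d)$ by the facts recalled just above; since $\Aut(\mathbb{B}_d)$ is a group, every composition $U \circ \varphi_a$ belongs to it.

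For the inclusion "$\subseteq$", I would take $\psi \in \Aut(\mathbb{B}_d)$ and set $a = \psi^{-1}(0) \in \mathbb{B}_d$. Since $\varphi_a(0) = a$, the map $F := \psi \circ \varphi_a$ is an automorphism with $F(0) = \psi(a) = 0$, and since $\varphi_a$ is an involution its compositional inverse $F^{-1} = \varphi_a \circ \psi^{-1}$ is again an automorphism fixing $0$. Both $F$ and $F^{-1}$ are holomorphic self-maps of $\mathbb{B}_d$ fixing the origin, so the Schwarz lemma (Theorem \ref{thm:Schwarz}) gives $\|DF(0)\| \le 1$ and $\|DF^{-1}(0)\| \le 1$.

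Next I would invoke the chain rule for Fr\'echet derivatives: differentiating $F^{-1} \circ F = \id$ and $F \circ F^{-1} = \id$ at $0$, and using $F(0) = F^{-1}(0) = 0$, yields $DF^{-1}(0)\, DF(0) = I = DF(0)\, DF^{-1}(0)$. Hence $A := DF(0)$ is a bounded invertible operator on $\mathbb{C}^d$ with $\|A\| \le 1$ and $\|A^{-1}\| \le 1$. For every $\xi$ this forces $\|\xi\| = \|A^{-1} A \xi\| \le \|A \xi\| \le \|\xi\|$, so $A$ is an isometry, and being invertible it is a unitary. The equality case of the Schwarz lemma then applies: since $DF(0)$ is an isometry, $F(z) = DF(0) z = A z$ for all $z \in \mathbb{B}_d$, i.e.\ $F = A$ with $A$ unitary. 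Finally, using the involution property once more, $\psi = F \circ \varphi_a^{-1} = A \circ \varphi_a$, which is of the asserted form.

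I do not anticipate a serious obstacle; the only points needing slight attention in the case $d = \infty$ are that the Fr\'echet derivative of a holomorphic self-map of the ball is a \emph{bounded} operator (recalled before Theorem \ref{thm:Schwarz}) and that both the Schwarz lemma and the chain rule are available in infinite dimensions, which they are via the references cited there. The step identifying an invertible contraction with contractive inverse as a unitary is purely a Hilbert space argument and is dimension-independent.
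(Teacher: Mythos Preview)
Your proposal is correct and follows exactly the route the paper indicates: the paper does not spell out a proof but merely remarks that, ``with the automorphisms $\varphi_a$ in hand, this can be deduced from the Schwarz lemma (Theorem \ref{thm:Schwarz}),'' and your argument is precisely the standard way to carry this out. The only minor cosmetic point is that since $\varphi_a$ is an involution you could write $\psi = F \circ \varphi_a$ directly rather than passing through $\varphi_a^{-1}$, but this is of course equivalent.
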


\begin{remark}
  If $V$ is an isometry, then a computation shows that
  \begin{equation}
    \label{eqn:involution_iso}
    V \varphi_a = \varphi_{V a} V.
  \end{equation}
  Thus, we also have the reverse description
  \begin{equation*}
    \Aut(\mathbb{B}_d) = \{\varphi_a \circ U: U \text{ unitary }, a \in \mathbb{B}_d\}.
  \end{equation*}
  However, if we consider not necessarily surjective maps in case $d = \infty$, then the order 
  matters. From \eqref{eqn:involution_iso}, we see that
  \begin{equation*}
    \{ \varphi_a \circ V: V \text{ isometry},  a \in \mathbb{B}_d \} \supset 
    \{ V \circ \varphi_a: V \text{ isometry }, a \in \mathbb{B}_d \},
  \end{equation*}
  but the inclusion is strict if $d = \infty$. Indeed, the range
  of every map on the right contains $0$, but if $a$ does not belong to the range of $V$,
  then $0$ does not belong to the range of $\varphi_a \circ V$.

  For our purposes, the larger set will be relevant, and this set can alternatively be described as
  \begin{equation*}
    \{ \varphi_a \circ V: V \text{ isometry},  a \in \mathbb{B}_d \} =
    \{ \phi \circ V: V \text{ isometry},  \phi \in \Aut(\mathbb{B}_d) \}.
  \end{equation*}
\end{remark}

\section{Weighted composition operators on the ball}

In this section, we specialize some of the general results for weighted composition operators
to unitarily invariant spaces on the ball.
Throughout, let $d \in \mathbb{N} \cup \{\infty\}$ and let $\mathcal{H}$ be a unitarily
invariant space on $\mathbb{B}_d$ with reproducing kernel $K$.

\begin{lemma}
  \label{lem:WCO_ball}
  Let $\phi: \mathbb{B}_d \to \mathbb{B}_d$ and $\delta: \mathbb{B}_d \to \mathbb{C}$
  be mappings such that $W_{\delta,\phi}$ is co-isometric on $\mathcal{H}$.
  Let $a = \phi(0)$.
  Then:
  \begin{enumerate}[label=\normalfont{(\alph*)}]
    \item $K(\phi(\cdot),a)$ does not vanish and there exists $\mu$ in $\mathbb{C}$ with $|\mu|=1$ such that
  \begin{equation*}
    \delta(z) = \mu \frac{K(a,a)^{1/2}}{K(\phi(z),a)} \quad \text{ for all } z \in \mathbb{B}_d;
  \end{equation*}
  \item
  \begin{equation*}
    K(z,w) = \frac{K(\phi(z),\phi(w))K(a,a)}{K(\phi(z),a) K(a,\phi(w))} \quad \text{ for all } z,w \in \mathbb{B}_d;
  \end{equation*}
    \item $\delta$ and $\phi$ are holomorphic;
    \item $\phi$ is injective.
  \end{enumerate}
\end{lemma}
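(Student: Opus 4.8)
The plan is to exploit Lemma \ref{lem:RKHS_WCO}(b), which says that the co-isometry hypothesis is equivalent to the kernel identity
\begin{equation}
  \label{eqn:plan_coiso}
  K(z,w) = \delta(z) \overline{\delta(w)} K(\phi(z),\phi(w)) \quad (z,w \in \mathbb{B}_d).
\end{equation}
The whole lemma is essentially a matter of extracting information from this single identity, using that $K$ is a unitarily invariant kernel with $a_0 = 1$.

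\medskip
\noindent\textbf{Part (a).} Set $w = 0$ in \eqref{eqn:plan_coiso}. Since $K(z,0) = a_0 = 1$ and $\phi(0) = a$, this gives $1 = \delta(z)\overline{\delta(0)} K(\phi(z),a)$ for all $z$. In particular $\delta(z) \neq 0$ and $K(\phi(z),a) \neq 0$ for every $z$, and
\begin{equation*}
  \delta(z) = \frac{1}{\overline{\delta(0)} K(\phi(z),a)}.
\end{equation*}
To pin down $\overline{\delta(0)}$, put $z = w = 0$ in \eqref{eqn:plan_coiso}: $1 = K(0,0) = |\delta(0)|^2 K(a,a)$, so $|\delta(0)| = K(a,a)^{-1/2}$. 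Writing $\delta(0) = \overline{\mu}\, K(a,a)^{-1/2}$ with $|\mu| = 1$ yields exactly the claimed formula $\delta(z) = \mu K(a,a)^{1/2}/K(\phi(z),a)$.

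\medskip
\noindent\textbf{Part (b).} Substitute the formula for $\delta$ from (a) back into \eqref{eqn:plan_coiso}. Then
\begin{equation*}
  \delta(z)\overline{\delta(w)} = |\mu|^2 \frac{K(a,a)}{K(\phi(z),a)\, \overline{K(\phi(w),a)}}
  = \frac{K(a,a)}{K(\phi(z),a)\, K(a,\phi(w))},
\end{equation*}
using the Hermitian symmetry $\overline{K(\phi(w),a)} = K(a,\phi(w))$. Plugging this in and rearranging \eqref{eqn:plan_coiso} gives the stated identity for $K(z,w)$. (No non-vanishing issue arises, since $K(\phi(\cdot),a)$ is nowhere zero by (a) and $K(a,a) \geq a_0 = 1 > 0$.)

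\medskip
\noindent\textbf{Part (c).} Every $f \in \mathcal{H}$ is holomorphic by Proposition \ref{prop:H_holo}; in particular the coordinate functions $z_j \in \mathcal{H}$ (since $a_1 > 0$) are holomorphic. Now $W_{\delta,\phi}(z_j) = \delta \cdot \phi_j$ belongs to $\mathcal{H}$, hence is holomorphic, and $W_{\delta,\phi}(1) = \delta$ belongs to $\mathcal{H}$, hence $\delta$ is holomorphic. Since $\delta$ is nowhere zero by (a), the quotients $\phi_j = (\delta \phi_j)/\delta$ are holomorphic, so $\phi$ is holomorphic (it is locally bounded, being $\mathbb{B}_d$-valued, and each $\langle \phi(\cdot), e_j\rangle = \phi_j$ is holomorphic — when $d = \infty$ one checks holomorphy against an orthonormal basis and then against arbitrary $\eta$ by a normal-families/uniform-boundedness argument).

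\medskip
\noindent\textbf{Part (d).} Suppose $\phi(z) = \phi(w)$. Apply the identity in (b) with the pair $(z,w)$ and also with $(z,z)$, $(w,w)$, $(w,z)$; since all four right-hand sides only depend on $\phi(z) = \phi(w)$, the values $K(z,z)$, $K(z,w)$, $K(w,z)$, $K(w,w)$ all coincide. Consider the kernel function $h := K_z - K_w \in \mathcal{H}$. Then
\begin{equation*}
  \|h\|^2 = K(z,z) - K(z,w) - K(w,z) + K(w,w) = 0,
\end{equation*}
so $K_z = K_w$, which forces $f(z) = \langle f, K_z \rangle = \langle f, K_w \rangle = f(w)$ for every $f \in \mathcal{H}$. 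Since $\mathcal{H}$ contains the coordinate functions $z_1, \dots, z_d$ (here we use $a_1 > 0$), this gives $z = w$. Hence $\phi$ is injective.

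\medskip
\noindent\textbf{Main obstacle.} The substantive points are all mild: (a) and (b) are pure bookkeeping with \eqref{eqn:plan_coiso}, and (d) is the standard fact that coordinate functions separate points. The only place requiring genuine care is the holomorphy of $\phi$ in part (c) when $d = \infty$: one knows each component $\phi_j = \langle \phi(\cdot), e_j \rangle$ is holomorphic, and $\phi$ is locally bounded (indeed bounded, valued in $\mathbb{B}_\infty$), and then weak holomorphy against a dense set of $\eta$ plus local boundedness upgrades to holomorphy of $\phi$ as an $\ell^2$-valued map; this is where the elementary infinite-dimensional holomorphy from the Preliminaries is invoked.
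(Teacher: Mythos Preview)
Your proof is correct and follows essentially the same approach as the paper. The only noteworthy differences are minor: for (c), the paper applies $W_{\delta,\phi}$ directly to $\langle \cdot,\eta\rangle$ for an arbitrary $\eta \in \mathbb{C}^d$ (which lies in $\mathcal{H}$ since $a_1>0$), obtaining that $\langle \phi(\cdot),\eta\rangle$ is holomorphic for every $\eta$ and thereby sidestepping the componentwise upgrade you flag as the main obstacle when $d=\infty$; for (d), the paper observes that $K(x,z)=K(x,w)$ for \emph{all} $x$ from formula (b), giving $K_z=K_w$ immediately, whereas you compute $\|K_z-K_w\|^2=0$ from the four diagonal/off-diagonal values --- both arguments are equally valid.
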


\begin{proof}

  (a) and (b)
  Lemma \ref{lem:RKHS_WCO} implies that that
  \begin{equation}
    \label{eqn:WCO_ball}
    K(z,w) = \delta(z) \overline{\delta(w)} K(\phi(z),\phi(w)) \quad \text{ for all } z,w \in \mathbb{B}_d.
  \end{equation}
  Setting $w = 0$, we obtain
  \begin{equation*}
    1 = \delta(z) \overline{\delta(0)} K(\phi(z),a).
  \end{equation*}
  Hence $K(\phi(\cdot),a)$ does not vanish. Setting $z=0$ gives
  $|\delta(0)|^2 K(a,a) = 1$, so we obtain (a).
  Substitung this formula for $\delta$ in \eqref{eqn:WCO_ball} gives (b).

  (c)
  We have
  \begin{equation*}
    \mathcal{H} \ni W_{\delta,\phi} 1 = \delta,
  \end{equation*}
  hence $\delta$ is holomorphic by Proposition \ref{prop:H_holo}.
  Let $\eta \in \mathbb{C}^d$.
  The assumption that the first coefficient of the kernel satsfies $a_1 > 0$ implies
  that $\langle \cdot,\eta \rangle \in \mathcal{H}$. Hence
  \begin{equation*}
    \mathcal{H} \in W_{\delta,\phi} \langle \cdot, \eta \rangle 
    = \delta \langle \phi(\cdot), \eta \rangle.
  \end{equation*}
  Since $\delta$ does not vanish, this implies that $\phi$ is holomorphic as well.

  (d) Let $\phi(z) = \phi(w)$. Then the formula in (b) shows that $K(x,z) = K(x,w)$ for all $x \in \mathbb{B}_d$,
  which implies $z = w$ since $\mathcal{H}$ contains $\langle \cdot,\eta \rangle$ for all $\eta \in \mathbb{C}^d$ and
  hence separates the points of $\mathbb{B}_d$.
\end{proof}

The Schwarz lemma implies the following result.
\begin{lemma}
  \label{lem:Schwarz_kernel}
  Let $\phi: \mathbb{B}_d \to \mathbb{B}_d$ be holomorphic.
  If  
  \begin{equation*}
    K(\phi(z), \phi(w)) = K(z,w) \quad \text{ for all } z \in \mathbb{B}_d,
  \end{equation*}
  then there exists
  a linear isometry $V$ such that
  \begin{equation*}
    \phi(z) = V z \quad \text{ for all } z \in \mathbb{B}_d.
  \end{equation*}
\end{lemma}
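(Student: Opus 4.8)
The plan is to extract information about $\phi$ at the origin from the kernel identity and then invoke the Schwarz lemma (Theorem \ref{thm:Schwarz}). First I would set $a = \phi(0)$. Evaluating the hypothesis $K(\phi(z),\phi(w)) = K(z,w)$ at $z = w = 0$ gives $K(a,a) = K(0,0) = a_0 = 1$. Since $K(a,a) = \sum_n a_n \|a\|^{2n}$ with $a_0 = 1$ and $a_1 > 0$, and $\|a\|^{2n} \le \|a\|^2$ for $n \ge 1$, equality $K(a,a) = 1$ forces $\|a\|^2 \cdot (a_1 + a_2 \|a\|^2 + \cdots) = 0$; as $a_1 > 0$ this gives $a = 0$, i.e.\ $\phi(0) = 0$.

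Next I would differentiate the kernel identity to control $D\phi(0)$. Fix $\xi, \eta \in \mathbb{C}^d$ and consider the scalar-valued real-analytic function $(\lambda,\bar\mu) \mapsto K(\phi(\lambda \xi), \phi(\mu \eta)) = K(\lambda \xi, \mu \eta) = \sum_n a_n \lambda^n \bar\mu^n \langle \xi,\eta\rangle^n$. Since $\phi$ is holomorphic with $\phi(0) = 0$, we have $\phi(\lambda\xi) = \lambda D\phi(0)\xi + O(\lambda^2)$, so expanding $K(\phi(\lambda\xi),\phi(\mu\eta)) = \sum_n a_n \langle \phi(\lambda\xi),\phi(\mu\eta)\rangle^n$ and extracting the coefficient of $\lambda \bar\mu$ yields $a_1 \langle D\phi(0)\xi, D\phi(0)\eta \rangle = a_1 \langle \xi,\eta\rangle$. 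Because $a_1 > 0$, this says $\langle D\phi(0)\xi, D\phi(0)\eta\rangle = \langle \xi,\eta\rangle$ for all $\xi,\eta$, i.e.\ $V := D\phi(0)$ is a linear isometry. Then Theorem \ref{thm:Schwarz}, applied to $\phi: \mathbb{B}_d \to \mathbb{B}_d$ with $\phi(0) = 0$ and $D\phi(0) = V$ an isometry, gives $\phi(z) = Vz$ for all $z \in \mathbb{B}_d$, as desired.

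The main point requiring a little care is the coefficient extraction in the second step, especially in the infinite-dimensional case, where one should justify differentiating $\lambda \mapsto K(\phi(\lambda\xi),\phi(\mu\eta))$ term by term: this is legitimate because $K(z,w) = \sum_n a_n \langle z,w\rangle^n$ converges locally uniformly on $\mathbb{B}_d \times \mathbb{B}_d$ (the tail is dominated geometrically), and $\phi$ is holomorphic, so the composition is holomorphic in each variable separately and its power series coefficients are obtained by the usual Cauchy formula. Equivalently, one can avoid series manipulations entirely: differentiate $\frac{\partial^2}{\partial\lambda\,\partial\bar\mu}$ of both sides at $\lambda = \mu = 0$ using the chain rule, noting that only the $n = 1$ term of $K$ contributes a nonzero mixed derivative at the origin since all higher terms are $O(|\lambda|^2)$ or $O(|\mu|^2)$. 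Either way the computation is routine; the real content is just $\phi(0) = 0$ plus the Schwarz lemma, so I do not anticipate any serious obstacle.
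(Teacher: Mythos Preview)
Your proof is correct and follows essentially the same route as the paper: show $\phi(0)=0$ from $K(\phi(0),\phi(0))=1$ and $a_1>0$, differentiate the kernel identity at the origin to see that $V=D\phi(0)$ is an isometry, and then apply Theorem~\ref{thm:Schwarz}. The only cosmetic difference is that the paper differentiates first in $\lambda$ and then in $\mu$ (two one-variable derivatives) rather than extracting the $\lambda\bar\mu$-coefficient in a single step.
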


\begin{proof}
  Let
  \begin{equation*}
    K(z,w) = \sum_{n=0}^\infty a_n \langle z,w \rangle^n.
  \end{equation*}
  The assumption implies in particular that $K(\phi(0), \phi(0)) = K(0,0) = 1$, so since $a_1 > 0$,
  this forces $\phi(0) = 0$.
  Let $V = D\phi(0)$. We show that $V$ is an isometry; the result then follows from Theorem \ref{thm:Schwarz}, the variant of the Schwarz lemma.

  To see that $V$ is an isometry, fix $z,w \in \mathbb{B}_d$. Then for all $\lambda,\mu \in \mathbb{D}$, we have
  \begin{equation*}
    K(\phi(\lambda z), \phi(\mu w)) = K(\lambda z, \mu w).
  \end{equation*}
  Differentiating both sides with respect to $\lambda$ and evaluating at $\lambda = 0$, we find that
  \begin{equation*}
    a_1 \langle D \phi(0) z, \phi(\mu w) \rangle = a_1 \langle z, \mu w \rangle.
  \end{equation*}
  Since $a_1 > 0$, this implies
  \begin{equation*}
    \langle \phi(\mu w), D \phi(0) z \rangle  = \langle \mu w, z \rangle. 
  \end{equation*}
  Taking the derivative with respect to $\mu$ at the origin yields
  \begin{equation*}
    \langle D \phi(0) w, D \phi(0) z \rangle = \langle w,z \rangle,
  \end{equation*}
  so that $V = D \phi(0)$ is an isometry, as desired.
\end{proof}

We also need the following simple observation.

\begin{lemma}
  \label{lem:WCO_with_iso}
  Let $V$ be a linear isometry and let $\mu \in \mathbb{C}$ with $|\mu| = 1$.
  Then $W_{\mu,V}$ is a co-isometry. Moreover, $W_{\mu,V}$ is unitary if and only if $V$ is unitary.
\end{lemma}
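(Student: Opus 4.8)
The plan is to establish the two assertions separately, using Lemma~\ref{lem:RKHS_WCO}(b) for the co-isometry claim and the product formula \eqref{eqn:product_WCO} for the characterization of unitarity.

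First I would check that $W_{\mu,V}$ is a co-isometry. Since $\|Vz\| = \|z\| < 1$, the map $V$ sends $\mathbb{B}_d$ into itself, so it is a legitimate composition symbol. To apply Lemma~\ref{lem:RKHS_WCO}(b) with $\delta \equiv \mu$ and $\phi = V$, I would verify the kernel identity \eqref{eqn:co_iso}. Writing $K(z,w) = \sum_{n=0}^\infty a_n \langle z,w\rangle^n$ and using that an isometry preserves inner products, $\langle Vz, Vw\rangle = \langle z,w\rangle$, we obtain $K(Vz,Vw) = K(z,w)$; since $|\mu| = 1$, this is exactly \eqref{eqn:co_iso}, and Lemma~\ref{lem:RKHS_WCO}(b) yields that $W_{\mu,V}$ is a well-defined co-isometry.

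Next I would treat the ``moreover'' part. If $V$ is unitary, then $V^{-1} = V^*$ is again unitary, so by the first part $W_{\overline{\mu}, V^{-1}}$ is a co-isometry, in particular bounded, and \eqref{eqn:product_WCO} shows that $W_{\mu,V}$ and $W_{\overline{\mu},V^{-1}}$ are two-sided inverses of one another. A boundedly invertible co-isometry is unitary (from $W_{\mu,V}W_{\mu,V}^* = \id$ and invertibility one gets $W_{\mu,V}^* = W_{\mu,V}^{-1}$), so $W_{\mu,V}$ is unitary. Conversely, if $V$ is not unitary, then, being an isometry, it is not surjective, so I can pick a unit vector $\zeta$ orthogonal to $\operatorname{ran}(V)$. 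The standing assumption $a_1 > 0$ guarantees that $g := \langle\cdot,\zeta\rangle$ belongs to $\mathcal{H}$, and $g$ is not the zero function since $g(t\zeta) = t \neq 0$ for $0 < t < 1$. But $g(Vz) = \langle Vz,\zeta\rangle = 0$ for every $z \in \mathbb{B}_d$, so $W_{\mu,V} g = \mu(g\circ V) = 0$; thus $W_{\mu,V}$ is not injective and hence not unitary.

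I do not expect any serious obstacle here: the proof is short, and the only point that needs a little care is this last implication, where one combines the fact that $\mathcal{H}$ contains the nonzero linear functional $\langle\cdot,\zeta\rangle$ (guaranteed by $a_1 > 0$, exactly as used in the proofs of Lemmas~\ref{lem:WCO_ball} and~\ref{lem:Schwarz_kernel}) with the elementary observation that a non-unitary isometry cannot be surjective.
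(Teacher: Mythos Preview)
Your proof is correct and follows essentially the same outline as the paper's. The only minor variation is in the converse direction: where you exhibit the nonzero linear functional $\langle\cdot,\zeta\rangle$ in the kernel of $W_{\mu,V}$, the paper instead applies $W_{\mu,V}$ to the reproducing kernel $K_z$ at a point $z$ with $V^*z=0$, obtains $W_{\mu,V}K_z=\mu$, and uses the isometry assumption to force $K(z,z)=1$ and hence $z=0$; both arguments rest on $a_1>0$ and a vector orthogonal to $\operatorname{ran}(V)$.
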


\begin{proof}
  It is immediate from Lemma \ref{lem:RKHS_WCO} that $W_{\mu,V}$ is a co-isometry.
  If $V$ is unitary, then $W_{\mu,V}$ is invertible with inverse $W_{\overline{\mu},V^*}$, hence $W_{\mu,V}$ is unitary.

  Conversely, assume that $W_{\mu,V}$ is unitary and let $z \in \mathbb{B}_d$ with $V^* z = 0$.
  Then
  \begin{equation*}
    W_{\mu,V} K_z = \mu K_z \circ V = \mu K_{V^* z} = \mu.
  \end{equation*}
  Since $W_{\mu,V}$ is an isometry, it follows that $1 = \|K_z\|^2 = K(z,z)$.
  Since the first coefficient of $K$ satisfies $a_1 > 0$, this implies $z= 0$, showing
  that $V^*$ is injective, i.e. $V$ is unitary.
\end{proof}

\section{$\mathcal H_\gamma$ spaces}
\label{sec:H_gamma}

In this section, we study the spaces $\mathcal{H}_\gamma$ with reproducing kernel
\begin{equation*}
  K(z,w) = \frac{1}{(1 - \langle z,w \rangle)^\gamma},
\end{equation*}
and generalize Le's theorem to infinite dimensions.
As mentioned in the introduction, co-isometric weighted composition operators need not be unitary in infinite
dimensions, so Le's theorem splits into two parts. We first consider the co-isometric case.

\begin{theorem}
  \label{thm:H_gamma_co_iso}
  Let $d \in \mathbb{N} \cup \{\infty\}$, let $\gamma \in (0,\infty)$ and
  let $\phi: \mathbb{B}_d \to \mathbb{B}_d$ and $\delta: \mathbb{B}_d \to \mathbb{C}$.
  The following are equivalent:
  \begin{enumerate}[label=\normalfont{(\roman*)}]
    \item $W_{\delta,\phi}$ is a co-isometry on $\mathcal{H}_\gamma$;
    \item there exist $a \in \mathbb{B}_d$ and an isometry $V$ on $\mathbb{C}^d$ such that $\phi = \varphi_a V$
      and $\delta(z) = \mu \frac{K(Vz,a)}{K(a,a)^{1/2}}$
      for some $\mu \in \mathbb{C}$ with $|\mu| = 1$.
  \end{enumerate}
\end{theorem}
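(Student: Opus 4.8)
The plan is to prove the equivalence by showing (ii) $\Rightarrow$ (i) directly and (i) $\Rightarrow$ (ii) by combining the structural results already established. For (ii) $\Rightarrow$ (i), I would observe that $W_{\delta,\phi} = W_{\delta_0,\varphi_a} W_{\mu,V}$ for a suitable $\delta_0$ by the composition formula \eqref{eqn:product_WCO}; since $W_{\mu,V}$ is a co-isometry by Lemma \ref{lem:WCO_with_iso}, it suffices to verify that the weighted composition operator with symbols $\varphi_a$ and $\delta_0(z) = K(z,a)/K(a,a)^{1/2}$ is a co-isometry on $\mathcal{H}_\gamma$. By Lemma \ref{lem:RKHS_WCO}(b), this reduces to the kernel identity
\begin{equation*}
  K(z,w) = \frac{K(z,a)\overline{K(w,a)}}{K(a,a)} K(\varphi_a(z),\varphi_a(w)),
\end{equation*}
which for $K(z,w) = (1-\langle z,w\rangle)^{-\gamma}$ follows by raising the well-known automorphism identity $1 - \langle \varphi_a(z),\varphi_a(w)\rangle = \frac{(1-\|a\|^2)(1-\langle z,w\rangle)}{(1-\langle z,a\rangle)(1-\langle a,w\rangle)}$ to the power $-\gamma$; then tracking the extra $\overline{\delta(w)}$ and unimodular $\mu$ shows the full operator is a co-isometry.

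For (i) $\Rightarrow$ (ii), suppose $W_{\delta,\phi}$ is a co-isometry and set $a = \phi(0)$. Lemma \ref{lem:WCO_ball} gives that $\phi$ is holomorphic and injective, that $K(\phi(\cdot),a)$ does not vanish, that $\delta(z) = \mu K(a,a)^{1/2}/K(\phi(z),a)$ for some unimodular $\mu$, and that the kernel factorization in part (b) holds. The idea now is to "straighten out" $a$: replace $\phi$ by $\psi := \varphi_a \circ \phi$, which is holomorphic with $\psi(0) = \varphi_a(a) = 0$. Using the kernel identity in Lemma \ref{lem:WCO_ball}(b) together with the automorphism identity above (applied now inside $K$ with arguments $\phi(z), \phi(w)$ and the point $a$), one should be able to simplify and obtain $K(\psi(z),\psi(w)) = K(z,w)$ for all $z,w$. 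Concretely, from (b) we have $K(z,w) K(\phi(z),a) K(a,\phi(w)) = K(\phi(z),\phi(w)) K(a,a)$, and substituting the explicit $\gamma$-kernel and the transformation law for $1 - \langle \varphi_a(\phi(z)),\varphi_a(\phi(w))\rangle$ should collapse everything to the claimed identity. Once $K(\psi(z),\psi(w)) = K(z,w)$ is in hand, Lemma \ref{lem:Schwarz_kernel} yields a linear isometry $V$ with $\psi(z) = Vz$, hence $\varphi_a(\phi(z)) = Vz$, i.e.\ $\phi = \varphi_a^{-1} V = \varphi_a V$ since $\varphi_a$ is an involution. Finally, back-substituting $\phi = \varphi_a V$ into the formula for $\delta$ from Lemma \ref{lem:WCO_ball}(a) and using the automorphism identity once more to rewrite $K(\varphi_a(Vz),a)$ in terms of $K(Vz,a)$ gives $\delta(z) = \mu' K(Vz,a)/K(a,a)^{1/2}$ for a possibly adjusted unimodular constant $\mu'$, completing (ii).

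The main obstacle I anticipate is the bookkeeping in the step deriving $K(\psi(z),\psi(w)) = K(z,w)$: one must carefully manipulate the fractional powers $(1-\langle\cdot,\cdot\rangle)^{-\gamma}$, keeping track of which arguments are holomorphic versus anti-holomorphic, and confirm that the various factors of $(1-\|a\|^2)$, $(1 - \langle \phi(z),a\rangle)$ and $(1 - \langle a,\phi(w)\rangle)$ cancel exactly. Because $\mathcal{H}_\gamma$ has $K$ of the special power form, this cancellation is precisely what singles out these spaces, so it is the crux; everything else is assembling lemmas. A minor additional point to handle cleanly is the unimodular constant: the constant $\mu$ produced by Lemma \ref{lem:WCO_ball}(a) and the one appearing in (ii) may differ by a unimodular factor coming from the rewriting of $\delta$, but since any unimodular multiple of a valid $\delta$ is again valid, this causes no difficulty. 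One should also note that in the case $d < \infty$ an isometry $V$ is automatically unitary, so Theorem \ref{thm:H_gamma_co_iso} reduces to the (i)$\Leftrightarrow$(iii) part of Theorem \ref{thm:H_gamma_intro}, which provides a useful consistency check.
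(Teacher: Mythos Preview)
Your plan is correct and follows essentially the same route as the paper: for (ii)$\Rightarrow$(i) both arguments reduce to the automorphism identity for $1-\langle\varphi_a(z),\varphi_a(w)\rangle$ and Lemma~\ref{lem:RKHS_WCO}(b); for (i)$\Rightarrow$(ii) both reduce to the case $\phi(0)=0$ by composing with $\varphi_a$ and then invoke Lemma~\ref{lem:Schwarz_kernel}. Two small remarks.

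First, in (ii)$\Rightarrow$(i) your factorization is written in the wrong order: by \eqref{eqn:product_WCO} the operator $W_{\delta_0,\varphi_a}W_{\mu,V}$ has composition symbol $V\circ\varphi_a$, not $\varphi_a\circ V=\phi$. The correct decomposition is $W_{\delta,\phi}=W_{\mu,V}W_{\delta_0,\varphi_a}$; since the product of co-isometries is again a co-isometry, your argument goes through once this is fixed. (The paper instead verifies the single kernel identity \eqref{eqn:K_V_phi} directly, which sidesteps this bookkeeping.)

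Second, for (i)$\Rightarrow$(ii) your direct kernel computation establishing $K(\psi(z),\psi(w))=K(z,w)$ is fine: plugging $u=\phi(z)$, $v=\phi(w)$ into the automorphism identity gives exactly the right-hand side of Lemma~\ref{lem:WCO_ball}(b). The paper achieves the same reduction operator-theoretically, observing that $W_{\delta,\phi}W_{\psi,\varphi_a}$ (with $\psi(z)=K(z,a)K(a,a)^{-1/2}$) is a co-isometric WCO with composition symbol $\varphi_a\circ\phi$ fixing $0$, and then applying the $a=0$ case; this avoids re-deriving the kernel cancellation by hand but is logically equivalent to what you propose. Your back-substitution for $\delta$ also works without any adjustment of the constant: using $a=\varphi_a(0)$ and the automorphism identity one gets $K(\varphi_a(Vz),a)=K(a,a)/K(Vz,a)$, so the same $\mu$ survives.
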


\begin{proof}
  (ii) $\Rightarrow$ (i)
  This proof proceeds essentially as Le's proof in the finite dimensional setting; see \cite[Proposition 3.1]{Le12}.
  The key point is the formula
  \begin{equation}
    \label{eqn:auto_formula}
    1 - \langle \varphi_a(z), \varphi_a(w) \rangle = \frac{(1 - \|a\|^2) (1 - \langle z,w \rangle) }{(1 - \langle z,a \rangle)(1 - \langle a,w \rangle)  };
  \end{equation}
  see \cite[Theorem 2.2.2]{Rudin08}.
  From this formula, it follows that
  \begin{equation*}
    K(z,w) = \frac{K(z,a) K(a,w)}{K(a,a)} K(\varphi_a(z), \varphi_a(w)).
  \end{equation*}
  (Some care must be taken when raising \eqref{eqn:auto_formula} to the power of $\gamma$.
  But each factor appearing in \eqref{eqn:auto_formula} has positive real part, and the identity $(\lambda \mu)^\gamma = \lambda^\gamma \mu^\gamma$
  holds for complex numbers $\lambda,\mu$ with positive real part, so this operation is valid after
  we arrange \eqref{eqn:auto_formula} to have two complex factors on each side.)
  Replacing $z$ with $V z$ and $w$ with $V w$, we find that
  \begin{equation}
    \label{eqn:K_V_phi}
    K(z,w) = \frac{K(Vz,a) K(a,Vw)}{K(a,a)} K(\varphi_a(V z), \varphi_a(V w)).
  \end{equation}
  Recalling the definition of $\delta$ and $\phi$ in the statement of the theorem,
  Lemma \ref{lem:RKHS_WCO} then shows that $W_{\delta,\phi}$ is a co-isometry.

  (i) $\Rightarrow$ (ii)
  While Le's proof can be generalized to infinite dimensions, we argue slightly differently.
  We first consider the case when $\phi(0) = 0$.
  Lemma \ref{lem:WCO_ball} shows that $\delta$ is a unimodular constant, $\phi$ is holomorphic,
  and
  \begin{equation*}
    K(z,w) = K(\phi(z),\phi(w)) \quad \text{ for all } z,w \in \mathbb{B}_d.
  \end{equation*}
  Lemma \ref{lem:Schwarz_kernel} now yields an isometry $V$ with $\phi = V$. This completes the proof
  in case $\phi(0) = 0$.

  Next, let $\phi: \mathbb{B}_d \to \mathbb{B}_d$ be an arbitrary mapping. Let $a = \phi(0)$ and $\psi(z) = K(z,a) K(a,a)^{-1/2}$. By the already established implication, $W_{\psi,\varphi_a}$ is a co-isometry, hence so is the product
  $W_{\delta,\phi} W_{\psi, \varphi_a}$.
  By Equation \eqref{eqn:product_WCO}, this product is a weighted composition operator
  with composition symbol $\varphi_a \circ \phi$.
  Since $(\varphi_a \circ \phi)(0) = 0$, we may apply the already established case $a = 0$ to this weighted
  composition operator to obtain an isometry $V$ and a unimodular constant $\mu$ with
  $\varphi_a \circ \phi = V$ and
  \begin{equation*}
    \delta (\psi \circ \phi) = \mu.
  \end{equation*}
  Since $\varphi_a$ is an involution, $\phi = \varphi_a V$. Moreover,
  \begin{equation*}
    \delta(z) = \frac{\mu}{\psi(\phi(z))}
    = \mu \frac{K(a,a)^{1/2}}{K( \varphi_a( V z), a)}
    = \mu \frac{K(V z, a)}{K(a,a)^{1/2}},
  \end{equation*}
  where in the last step, we have used \eqref{eqn:K_V_phi} with $w = 0$.
\end{proof}

The case of unitary weighted composition operators is covered in the following result.

\begin{corollary}
  \label{cor:H_gamma_unitary}
  Let $d \in \mathbb{N} \cup \{\infty\}$, let $\gamma \in (0,\infty)$ and let $\phi: \mathbb{B}_d \to \mathbb{B}_d$
  and $\delta: \mathbb{B}_d \to \mathbb{C}$. The following are equivalent:
  \begin{enumerate}[label=\normalfont{(\roman*)}]
    \item $W_{\delta,\phi}$ is a unitary on $\mathcal{H}_\gamma$;
    \item $\phi \in \Aut(\mathbb{B}_d)$ and $\delta(z) = \mu \frac{K(z,a)}{K(a,a)^{1/2}}$,
      where $a = \phi^{-1}(0)$ and $\mu \in \mathbb{C}$ with $|\mu| = 1$.
  \end{enumerate}
\end{corollary}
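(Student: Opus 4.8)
The plan is to deduce Corollary \ref{cor:H_gamma_unitary} from Theorem \ref{thm:H_gamma_co_iso} by isolating, among the co-isometric weighted composition operators, those that are in fact unitary. The guiding principle is Lemma \ref{lem:WCO_with_iso}: a weighted composition operator built from a linear isometry is unitary precisely when that isometry is unitary, and the general co-isometric case reduces to this linear case after composing with an automorphism.

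For the implication (i) $\Rightarrow$ (ii), note that a unitary is in particular a co-isometry, so Theorem \ref{thm:H_gamma_co_iso} supplies $a' := \phi(0) \in \mathbb{B}_d$, an isometry $V$ on $\mathbb{C}^d$, and $\mu \in \mathbb{C}$ with $|\mu| = 1$ such that $\phi = \varphi_{a'} V$ and $\delta(z) = \mu K(Vz, a') K(a',a')^{-1/2}$. As in the proof of Theorem \ref{thm:H_gamma_co_iso}, with $\psi(z) = K(z,a') K(a',a')^{-1/2}$, the product $W_{\delta,\phi} W_{\psi,\varphi_{a'}}$ is, by \eqref{eqn:product_WCO}, the weighted composition operator $W_{\mu,V}$. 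Since $\varphi_{a'}$ is a bijection of $\mathbb{B}_d$ and $\psi$ never vanishes, $W_{\psi,\varphi_{a'}}$ is an injective co-isometry, hence unitary (a co-isometry is unitary iff it is injective, its adjoint being an isometry with closed range). Therefore $W_{\mu,V}$ is unitary iff $W_{\delta,\phi}$ is, and Lemma \ref{lem:WCO_with_iso} forces $V$ to be unitary. Consequently $\phi = \varphi_{a'} V \in \Aut(\mathbb{B}_d)$, and setting $a := \phi^{-1}(0) = V^* a'$, i.e.\ $a' = Va$, unitary invariance of $K$ gives $K(Vz,a') = K(Vz,Va) = K(z,a)$ and $K(a',a') = K(a,a)$, so that $\delta(z) = \mu K(z,a) K(a,a)^{-1/2}$, which is (ii).

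For (ii) $\Rightarrow$ (i), given $\phi \in \Aut(\mathbb{B}_d)$ with $a = \phi^{-1}(0)$, the map $\phi \circ \varphi_a$ fixes $0$, so by Theorem \ref{thm:auto_group} it is a unitary $U$; then $\phi = U \varphi_a = \varphi_{Ua} U$ by \eqref{eqn:involution_iso}. This exhibits $\phi$ in the form of Theorem \ref{thm:H_gamma_co_iso}(ii) with isometry $V = U$ and base point $Ua$, and, again using unitarity of $U$, the accompanying multiplication symbol is exactly $\mu K(\cdot, a) K(a,a)^{-1/2}$. Hence $W_{\delta,\phi}$ is a co-isometry; since $\phi$ is surjective and $\delta$ is nowhere zero, $W_{\delta,\phi}$ is also injective, hence unitary.

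I do not expect a serious obstacle here, since Theorem \ref{thm:H_gamma_co_iso} already does the heavy lifting. The one mildly delicate point is the bookkeeping of base points: Theorem \ref{thm:H_gamma_co_iso} is phrased in terms of $\phi(0)$ while the corollary is phrased in terms of $\phi^{-1}(0)$, and reconciling the two requires the relation $a' = Va$, which is available only once $V$ is known to be unitary, together with the unitary invariance of the kernel $K$.
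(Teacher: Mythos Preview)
Your proposal is correct and follows essentially the same route as the paper: both directions rest on Theorem~\ref{thm:H_gamma_co_iso} together with Lemma~\ref{lem:WCO_with_iso}, reducing to the linear case by composing with the auxiliary operator $W_{\psi,\varphi_{a'}}$. The only cosmetic differences are that you avoid the paper's separate $b=0$ case by computing the product as $W_{\mu,V}$ directly, and you justify unitarity of $W_{\psi,\varphi_{a'}}$ via injectivity of a co-isometry rather than by first establishing the implication (ii)~$\Rightarrow$~(i).
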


\begin{proof}
  (ii) $\Rightarrow$ (i)
  By Theorem \ref{thm:auto_group}, there exists a unitary $U$
  such that $\phi = U \varphi_a$, where necessarily $a = \phi^{-1}(0)$. By Theorem \ref{thm:H_gamma_co_iso},
  $W_{\delta,\varphi_a}$ is a co-isometry.
  The operator $W_{\delta,\varphi_a}$ is injective since $\delta$ does not vanish and $\varphi_a$
  is surjective. Hence $W_{\delta,\varphi_a}$ is unitary. It is clear that $W_{1,U}$ is unitary,
  hence so is $W_{\delta,\phi} = W_{\delta,\phi_a} W_{1,U}$.

  (i) $\Rightarrow$ (ii)
  By Theorem \ref{thm:H_gamma_co_iso}, there exist $b \in \mathbb{B}_d$ and an isometry
  $V$ such that $\phi = \varphi_b V$ and $\delta(z) = \mu \frac{K(V z,b)}{K(b,b)^{1/2}}$, where $|\mu| = 1$.
  We will show that $V$ is unitary.

  If $b = 0$, Lemma \ref{lem:WCO_with_iso} shows that $V$ is unitary.
  If $b$ is arbitary, as in the proof of Theorem \ref{thm:H_gamma_co_iso}, we use the operator $W_{\psi,\varphi_b}$, where $\psi(z) = \frac{K(z,b)}{K(b,b)^{1/2}}$. This operator is unitary by the already established implication,
  so $W_{\delta,\phi} W_{\psi,\varphi_b}$ is a unitary weighted composition operator with composition
  symbol $\varphi_b \circ \phi = V$. By the preceding paragraph, $V$ is unitary.

  Since $V$ is unitary, $\phi = \varphi_b V \in \Aut(\mathbb{B}_d)$.
  Let $a = V^* b = \phi^{-1}(0)$. Then
  \begin{equation*}
    \delta(z) = \mu \frac{K(V z, b)}{K(b,b)^{1/2}} = \mu \frac{K(z,a)}{K(a,a)^{1/2}},
  \end{equation*}
  as desired.
\end{proof}

\section{Non-$\mathcal H_\gamma$ spaces}

\label{sec:non-H_gamma}

In this section, we consider unitarily invariant spaces that are not one of the spaces $\mathcal{H}_\gamma$.
Our goal is to show that these spaces only admit trivial co-isometric weighted composition operators.
In particular, this will establish the dichotomy between the spaces $\mathcal{H}_\gamma$ and all other spaces.

We first study weighted composition operators whose composition symbol is an automorphism.
In this setting, we will make use of the fact that the unitary group is a maximal subsemigroup of $\Aut(\mathbb{B}_d)$.

\begin{proposition}
  \label{prop:max_subsemigroup}
  Let $d \in \mathbb{N} \cup \{\infty\}$ and let $S \subset \Aut(\mathbb{B}_d)$ be a subsemigroup that
  properly contains the unitary group. Then $S = \Aut(\mathbb{B}_d)$.
\end{proposition}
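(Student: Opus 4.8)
The plan is to show that any subsemigroup $S$ of $\Aut(\mathbb{B}_d)$ that properly contains the unitary group $\mathcal{U}$ must contain some nontrivial involution $\varphi_a$ with $a \neq 0$, and then bootstrap from a single such element to all of $\Aut(\mathbb{B}_d)$ using the semigroup and unitarity properties. First I would use Theorem \ref{thm:auto_group} to write an arbitrary element $\psi \in S \setminus \mathcal{U}$ as $\psi = U \varphi_a$ for some unitary $U$ and some $a \in \mathbb{B}_d$; since $\psi$ is not unitary we must have $a \neq 0$. Because $S$ contains all unitaries and is closed under composition, it follows that $\varphi_a = U^{-1} \psi \in S$. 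So without loss of generality $S$ contains a nontrivial involution $\varphi_a$ with $b := a \neq 0$.

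Next I would exploit the conjugation identity. For any unitary $U$, the map $U \varphi_b U^{-1}$ lies in $S$, and a direct computation (essentially \eqref{eqn:involution_iso} with $V = U$, using that $U$ is onto) gives $U \varphi_b U^{-1} = \varphi_{Ub}$. Hence $\varphi_c \in S$ for every $c$ in the unitary orbit of $b$, i.e. for every $c \in \mathbb{B}_d$ with $\|c\| = \|b\| =: t > 0$. The key remaining step is to leverage the semigroup structure to reach involutions $\varphi_c$ for $\|c\|$ taking a continuum of values, and ultimately every automorphism. Composing two such involutions $\varphi_{c_1} \varphi_{c_2}$ with $\|c_1\| = \|c_2\| = t$ gives another automorphism in $S$; by Theorem \ref{thm:auto_group} this equals $U' \varphi_{a'}$ for some unitary $U'$ and some $a'$, and one computes $\|a'\|$ explicitly in terms of $t$ and $\langle c_1, c_2 \rangle$ via the formula $1 - \langle \varphi_{c_1}(z),\varphi_{c_2}(z)\rangle$-type identities (or more directly by tracking where $0$ is sent: $(\varphi_{c_1}\varphi_{c_2})^{-1}(0) = \varphi_{c_2}(\varphi_{c_1}(0)) = \varphi_{c_2}(c_1)$, whose norm ranges over a nondegenerate interval as $\langle c_1,c_2\rangle$ varies over $\{s : |s| \le t^2\}$). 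Since $\|\varphi_{c_2}(c_1)\|^2 = 1 - \frac{(1-\|c_1\|^2)(1-\|c_2\|^2)}{|1-\langle c_1,c_2\rangle|^2}$ depends continuously on $\langle c_1, c_2 \rangle$ and takes the value $0$ (when $c_1 = c_2$) and strictly positive values, the set of attainable norms is a nontrivial subinterval of $[0,1)$; after one such composition we have enlarged the set of available "radii." Iterating, and using that for each attained radius we get the full unitary orbit, we obtain $\varphi_c \in S$ for a dense set of radii.

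To finish, I would argue that $S \supseteq \{\varphi_c : c \in \mathbb{B}_d\}$: given any $c$ with $\|c\| = r$, approximate $r$ by an attained radius and either close up (if the attained set of radii is already all of $[0,1)$) or, more robustly, observe that from $\varphi_{c_1}, \varphi_{c_2}$ with $c_1, c_2$ collinear and $\|c_i\|$ in the attained set, the composition $\varphi_{c_1}\varphi_{c_2}$ is again of the form (unitary)$\circ \varphi_{c_3}$ with $c_3$ collinear with $c_1$ and $\|c_3\|$ running over a full interval — the one-dimensional automorphism group acts transitively on the disc, so along a fixed complex line we recover every $\varphi_c$ with $c$ on that line, and then unitaries rotate the line arbitrarily. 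Once every $\varphi_c$ is in $S$, Theorem \ref{thm:auto_group} together with $\mathcal{U} \subseteq S$ and closure under composition gives $S = \Aut(\mathbb{B}_d)$. The main obstacle is the middle step: carefully verifying that composing involutions of equal norm genuinely produces a \emph{range} of new norms (not just the same norm or $0$), and handling $d = \infty$ where one must be careful that $c_1, c_2$ can be chosen with any prescribed inner product of modulus $\le t^2$ — this is where restricting attention to a $2$-dimensional subspace containing $b$ and reducing to the classical $\mathbb{B}_2$ (or even $\mathbb{B}_1$ along a line) computation keeps things clean.
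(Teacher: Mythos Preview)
Your proposal is correct and follows the same route as the proof the paper cites from \cite{Hartz17a} (the disc trick of Davidson--Ramsey--Shalit): strip off the unitary to get $\varphi_a \in S$, conjugate by unitaries to fill out the sphere $\|c\|=t$, and then use the composition identity $\|\varphi_{c_2}(c_1)\|^2 = 1 - (1-t^2)^2/|1-\langle c_1,c_2\rangle|^2$ along a complex line to see that the attainable radii form the interval $[0,\,2t/(1+t^2)]$, whose iterates exhaust $[0,1)$. The paper itself does not spell this out but simply refers to that earlier reference and notes that the reduction works verbatim for $d=\infty$, exactly as you observe by restricting to a two-dimensional slice.
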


\begin{proof}
  This is \cite[Proposition 9.9]{Hartz17a}, where the result was proved using the disc trick of Davidson, Ramsey
  and Shalit \cite{DRS11}. While the result in \cite{Hartz17a} is stated for $d < \infty$, the proof extends
  essentially verbatim to $d = \infty$.
  Indeed, the proof in \cite{Hartz17a} proceeds by a reduction to the one dimensional case with the help of the description of $\Aut(\mathbb{B}_d)$ of Theorem \ref{thm:auto_group}. (The disc trick is also directly available if $d = \infty$; see \cite[Section 4.2]{HS22}.)

  If we assume that $S$ is a closed subgroup of $\Aut(\mathbb{B}_d)$, then the result also follows
  from an earlier and very general theorem of Kaup and Upmeier \cite{KU76}; see also \cite[Theorem 1.2]{BKU78}.
\end{proof}

The result regarding weighted composition operators whose composition symbol is an automorphism is the following.

\begin{proposition}
  \label{prop:non_h_gamma_auto}
  Let $d \in \mathbb{N} \cup \{\infty\}$ and let $\mathcal{H}$ be a unitarily invariant space
  on $\mathbb{B}_d$ that is not one of the spaces $\mathcal{H}_\gamma$.
  If $\phi \in \Aut(\mathbb{B}_d)$ and $\delta: \mathbb{B}_d \to \mathbb{C}$ are such that
  $W_{\delta,\phi}$ is co-isometric on $\mathcal{H}$, then $\phi$ is unitary.
\end{proposition}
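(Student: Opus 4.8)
The plan is to argue by contradiction: suppose $W_{\delta,\phi}$ is co-isometric with $\phi \in \Aut(\mathbb{B}_d)$ but $\phi$ is not unitary. The strategy is to manufacture, from this single non-unitary automorphism, the full automorphism group acting by co-isometric weighted composition operators, and then show that forces $\mathcal{H} = \mathcal{H}_\gamma$ for some $\gamma$. Concretely, let
\begin{equation*}
  S = \{ \psi \in \Aut(\mathbb{B}_d) : W_{\eta,\psi} \text{ is co-isometric on } \mathcal{H} \text{ for some } \eta : \mathbb{B}_d \to \mathbb{C} \}.
\end{equation*}
First I would check that $S$ is a subsemigroup of $\Aut(\mathbb{B}_d)$: this is immediate from the product formula \eqref{eqn:product_WCO}, since a composition of co-isometries is a co-isometry and the composition symbol of the product is $\psi_1 \circ \psi_2$. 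Next, $S$ contains the unitary group, because $W_{1,U}$ is a (unitary, hence) co-isometry for every unitary $U$ by Lemma \ref{lem:WCO_with_iso}. By hypothesis $S$ contains the non-unitary automorphism $\phi$, so $S$ properly contains the unitary group. Proposition \ref{prop:max_subsemigroup} then gives $S = \Aut(\mathbb{B}_d)$; in particular, every involution $\varphi_a$ admits a multiplication symbol $\delta_a$ making $W_{\delta_a,\varphi_a}$ co-isometric on $\mathcal{H}$.

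Now I would extract the kernel identity. For each $a \in \mathbb{B}_d$, Lemma \ref{lem:WCO_ball}(b) applied to $W_{\delta_a,\varphi_a}$ (noting $\varphi_a(0) = a$) yields
\begin{equation*}
  K(z,w) = \frac{K(\varphi_a(z),\varphi_a(w)) \, K(a,a)}{K(\varphi_a(z),a) \, K(a,\varphi_a(w))}
\end{equation*}
for all $z,w$, and also that $K(\varphi_a(\cdot),a)$ is zero-free. Rearranging, this says that the function $g_a(z) := K(\varphi_a(z),a)$ satisfies
\begin{equation*}
  K(\varphi_a(z),\varphi_a(w)) = \frac{K(z,w) \, g_a(z) \, \overline{g_a(w)}}{K(a,a)}.
\end{equation*}
Writing $K(z,w) = \sum_n a_n \langle z,w\rangle^n$ with $a_0 = 1$, $a_1 > 0$, $a_n \ge 0$, the task is to show this functional equation, holding for all $a$, forces $a_n = \binom{\gamma + n - 1}{n}$ for some $\gamma > 0$, i.e. $K(z,w) = (1-\langle z,w\rangle)^{-\gamma}$. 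The natural route is to restrict everything to a single complex slice $z = \lambda e_1$, $w = \mu e_1$ and $a = t e_1$ with $t \in (0,1)$, reducing to the one-variable kernel $k(\lambda,\mu) = \sum_n a_n (\lambda\bar\mu)^n$ on the disc; on that slice $\varphi_{te_1}$ is the ordinary disc automorphism $\lambda \mapsto (t - \lambda)/(1 - t\lambda)$, and the functional equation becomes exactly the one solved in the $d=1$ case. I would then invoke (or reprove) the classification: a unitarily invariant kernel on the disc transforming this way under every disc automorphism must be $(1-\lambda\bar\mu)^{-\gamma}$ — this is the analytic heart of the dichotomy and is presumably where the bulk of a self-contained argument lives (differentiating the functional equation in $a$ at $a = 0$ to get a recursion on $(a_n)$, or comparing Taylor coefficients directly).

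The main obstacle I anticipate is precisely this last step: squeezing a full classification of the coefficient sequence $(a_n)$ out of the automorphism-invariance, in a way that is clean and also valid when $d = \infty$. The semigroup maximality argument (Proposition \ref{prop:max_subsemigroup}) does the heavy lifting of upgrading "one bad automorphism" to "all automorphisms," so the reduction to the kernel functional equation is painless; but turning that equation into $K = K_\gamma$ requires either a careful differentiation-and-recursion computation or an appeal to a known rigidity statement for unitarily invariant kernels on the disc. One subtlety worth flagging: zero-freeness of $K(\varphi_a(\cdot),a)$ on all of $\mathbb{B}_d$ — guaranteed by Lemma \ref{lem:WCO_ball}(a) — is what lets us divide freely and keep $g_a$ holomorphic and nonvanishing, which is needed to run the coefficient comparison; and the fact that $\mathcal{H} \ne \mathcal{H}_\gamma$ by hypothesis is what delivers the contradiction once the classification is in hand.
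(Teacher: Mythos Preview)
Your proposal is correct and follows essentially the same route as the paper: contrapositive, define the semigroup $S$ of automorphisms admitting a co-isometric weight, use the product formula and unitary invariance to see $S$ is a semigroup properly containing the unitary group, apply Proposition~\ref{prop:max_subsemigroup} to get $S=\Aut(\mathbb{B}_d)$, and then read off the kernel transformation law from Lemma~\ref{lem:WCO_ball}(b). The only difference is in the final step: where you sketch a slice reduction to $d=1$ and a coefficient recursion to force $K=(1-\langle z,w\rangle)^{-\gamma}$, the paper simply rewrites the identity as $K(\phi(z),\phi(w)) = K(z,w)K(a,a)/(K(z,a)K(a,w))$ for all $\phi\in\Aut(\mathbb{B}_d)$ with $a=\phi^{-1}(0)$ and invokes \cite[Proposition~4.3]{Hartz17a}, which packages exactly the classification you anticipate as the ``analytic heart.''
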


\begin{proof}
  Suppose that $\phi$ is not given by a unitary. We will show that $\mathcal{H} = \mathcal{H}_\gamma$
  for some $\gamma \in (0,\infty)$.
  To this end, let
  \begin{equation*}
    S = \{ \psi \in \Aut(\mathbb{B}_d): \exists \tau \text{ such that } W_{\tau,\psi} \text{ is co-isometric} \}.
  \end{equation*}
  Since $\mathcal{H}$ is unitarily invariant, $S$ contains all unitaries.
  Equation \eqref{eqn:product_WCO} shows that $S$ is a subsemigroup of $\Aut(\mathbb{B}_d)$.
  By assumption, $\phi \in S$, and $\phi$ is not a unitary. Hence $S = \Aut(\mathbb{B}_d)$ by Proposition \ref{prop:max_subsemigroup}.

  Now, Lemma \ref{lem:WCO_ball} shows that
  \begin{equation*}
    K(z,w) = \frac{K(\phi(z),\phi(w)) K(\phi(0),\phi(0))}{K(\phi(z),\phi(0)) K(\phi(0),\phi(w))}
  \end{equation*}
  for all $z,w \in \mathbb{B}_d$ and all $\phi \in \Aut(\mathbb{B}_d)$,
  and the denominator does not vanish.
  Replacing $z$ with $\phi(z)$, $w$ with $\phi(w)$ and $\phi$ with $\phi^{-1}$, we find that
  \begin{equation*}
    K(\phi(z),\phi(w)) = \frac{K(z,w) K(a,a)}{K(z,a) K(a,w)}
  \end{equation*}
  for all $z,w \in \mathbb{B}_d$ and $\phi \in \Aut(\mathbb{B}_d)$,
  where $a = \phi^{-1}(0)$. This is precisely the setting of \cite[Proposition 4.3]{Hartz17a},
  which implies that $\mathcal{H} = \mathcal{H}_\gamma$ for some $\gamma \in (0,\infty)$.
\end{proof}

Proposition \ref{prop:non_h_gamma_auto} shows that non-$\mathcal{H}_\gamma$ spaces do not admit
any non-trivial co-isometric weighted composition operators, provided we assume a priori that the composition
symbol is an automorphism.
To remove this a priori assumption, similar to Mart\'in, Mas and Vukoti\'c \cite{MMV20}, we distinguish two cases,
namely that of a bounded reproducing kernel and that of an unbounded reproducing kernel.
Observe that the $\mathcal{H}_\gamma$ spaces fall into the second category.

We first consider the case of bounded kernel. The following result generalizes \cite[Theorem 4]{MMV20}
from the disc to the unit ball, allowing $d = \infty$.
The basic idea of the proof is the same, but the details differ slightly since we cannot use Hardy space theory
such as the existence of radial limits.

\begin{proposition}
  \label{prop:bounded_kernel}
  Let $d \in \mathbb{N} \cup \{ \infty \}$ and let $\mathcal{H}$ be a unitarily invariant
  space with reproducing kernel $K$. Assume that $\sup_{z \in \mathbb{B}_d} K(z,z) < \infty$.
  If $\phi: \mathbb{B}_d \to \mathbb{B}_d$ and $\delta: \mathbb{B}_d \to \mathbb{C}$ are such that
  $W_{\delta,\phi}$ is co-isometric on $\mathcal{H}$, then $\phi$ is a linear isometry.
  If $W_{\delta,\phi}$ is a unitary on $\mathcal{H}$, then $\phi$ is a unitary on $\mathbb{C}^d$.
\end{proposition}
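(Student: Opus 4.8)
The plan is to reduce the whole statement to the single assertion that $\phi(0) = 0$. Granting this, the rest is immediate: if $a := \phi(0) = 0$, then Lemma~\ref{lem:WCO_ball}(a) forces $\delta(z) = \mu K(0,0)^{1/2}/K(\phi(z),0) = \mu$ to be a unimodular constant (because $K(\phi(z),0) = a_0 = 1$), the co-isometry identity \eqref{eqn:WCO_ball} collapses to $K(\phi(z),\phi(w)) = K(z,w)$, and Lemma~\ref{lem:Schwarz_kernel} produces a linear isometry $V$ with $\phi = V$. If moreover $W_{\delta,\phi}$ is unitary, then $W_{\delta,\phi} = W_{\mu,V}$, and Lemma~\ref{lem:WCO_with_iso} upgrades ``$V$ is an isometry'' to ``$V$ is a unitary''. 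So only the claim $\phi(0) = 0$ requires an argument.

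To prove $\phi(0) = 0$, I would proceed as follows. Write $K(z,z) = h(\|z\|^2)$ with $h(t) = \sum_n a_n t^n$; the boundedness hypothesis says precisely that $M := \sum_n a_n < \infty$, so $h$ is continuous on $[0,1]$ with $h(0) = 1$, $h(1) = M$, and $0 \le h \le M$ throughout. By Lemma~\ref{lem:WCO_ball}, $\delta$ is holomorphic and nowhere zero and \eqref{eqn:WCO_ball} holds; setting $w = z$ there gives $|\delta(z)|^2 = h(\|z\|^2)/h(\|\phi(z)\|^2) \ge h(\|z\|^2)/M$, since $\|\phi(z)\| < 1$. Now fix a unit vector $\zeta$ and $r \in (0,1)$, and apply the maximum modulus principle to the holomorphic function $\lambda \mapsto 1/\delta(\lambda\zeta)$ on the disc $\{|\lambda| \le r\}$: on the circle $|\lambda| = r$ one has $\|\lambda\zeta\|^2 = r^2$, so $|1/\delta(0)| \le \max_{|\lambda| = r} |1/\delta(\lambda\zeta)| \le (M/h(r^2))^{1/2}$; letting $r \to 1$ and using continuity of $h$ at $1$ gives $|\delta(0)| \ge 1$. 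On the other hand, taking $z = w = 0$ in \eqref{eqn:WCO_ball} gives $|\delta(0)|^2 K(a,a) = 1$, hence $K(a,a) \le 1$. Since $K(a,a) = 1 + a_1\|a\|^2 + \sum_{n \ge 2} a_n \|a\|^{2n} \ge 1$ with equality only for $a = 0$ (here the standing assumption $a_1 > 0$ enters), we conclude $\phi(0) = a = 0$.

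I expect the crux to be the lower bound $|\delta(0)| \ge 1$, which is also where the argument parts ways with Mart\'in, Mas and Vukoti\'c: in the disc they obtain the analogous fact by integrating $\log|\delta|$ over the circle and invoking radial boundary values of bounded holomorphic functions, tools that are not cleanly available on $\mathbb{B}_d$ for $d \ge 2$, and even less so for $d = \infty$. The point I would want to emphasize is that restricting $1/\delta$ to one complex line through the origin reduces the estimate to the ordinary one-variable maximum principle, and that the only input about the kernel one actually needs is the trivial bound $h \le M$ on $[0,1]$; this keeps the whole proof uniform in $d \in \mathbb{N} \cup \{\infty\}$. All the remaining ingredients — holomorphy and non-vanishing of $\delta$, the identity \eqref{eqn:WCO_ball}, the Schwarz-type Lemma~\ref{lem:Schwarz_kernel}, and Lemma~\ref{lem:WCO_with_iso} — are already in place from the preceding sections.
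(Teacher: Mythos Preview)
Your proof is correct and follows essentially the same approach as the paper: both arguments bound $|\delta(z)|^2 \ge K(z,z)/M$ from the co-isometry identity and then apply the one-variable maximum modulus principle to $1/\delta$ restricted to complex lines through the origin, after which Lemmas~\ref{lem:Schwarz_kernel} and~\ref{lem:WCO_with_iso} finish. The only cosmetic difference is that the paper first extends $\delta \in \mathcal{H}$ continuously to $\overline{\mathbb{B}_d}$ and applies the maximum principle on the closed disc to conclude directly that $\delta$ is a unimodular constant, whereas you work on discs of radius $r<1$ and let $r\to 1$ to obtain $|\delta(0)|\ge 1$, deduce $\phi(0)=0$, and then read off $\delta\equiv\mu$ from Lemma~\ref{lem:WCO_ball}(a); this avoids invoking the continuous extension but is otherwise the same argument.
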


\begin{proof}
  Let $M = \sup_{z \in \mathbb{B}_d} K(z,z) = \sum_{n=0}^\infty a_n$.
  Since $M < \infty$, the series defining $K$ converges uniformly on $\overline{\mathbb{B}_d} \times \overline{\mathbb{B}_d}$ and defines a jointly (norm) continuous function there. Thus, the elements of $\mathcal{H}$ all extend
  to (norm) continuous functions on the closed ball $\overline{\mathbb{B}_d}$.
  In particular, this is true for $\delta = W_{\delta,\phi} 1 \in \mathcal{H}$.

  Assume that $W_{\delta,\phi}$ is co-isometric.
  Lemma \ref{lem:RKHS_WCO} shows that
  \begin{equation}
    \label{eqn:bounded_kernel}
    K(z,w) = \delta(z) \overline{\delta(w)} K(\phi(z),\phi(w)) \quad \text{ for all } z,w \in \mathbb{B}_d.
  \end{equation}
  Taking $z=w$, we find that
  \begin{equation*}
    |\delta(z)|^2 = \frac{K(z,z)}{K(\phi(z),\phi(z))} \ge \frac{K(z,z)}{M} \quad \text{ for all } z \in \mathbb{B}_d.
  \end{equation*}
  From this formula, we deduce three things. First, $|\delta|^2 \ge \frac{1}{M}$ in $\overline{\mathbb{B}_d}$,
  second $|\delta| \ge 1$ on $\partial \mathbb{B}_d$, and third $|\delta(0)| \le 1$.
  Since $\delta$ is holomorphic by Lemma \ref{lem:WCO_ball}, we may apply the maximumum modulus principle to $\frac{1}{\delta}$ on each disc $\overline{\mathbb{D}} \zeta$
  for $\zeta \in \partial \mathbb{B}_d$ to conclude that $\delta$ is a unimodular constant.

  Feeding this information into \eqref{eqn:bounded_kernel}, we obtain $K(z,w) = K(\phi(z),\phi(w))$
  for all $z,w \in \mathbb{B}_d$. Since $\phi$ is holomorphic (again by Lemma \ref{lem:WCO_ball}),
  Lemma \ref{lem:Schwarz_kernel} implies that $\phi$ is a linear isometry.

  If $W_{\delta,\phi}$ is in addition unitary, then Lemma \ref{lem:WCO_with_iso} implies that $\phi$ is unitary.
\end{proof}

The case of an unbounded kernel calls for a different argument.
Since we know that the $\mathcal{H}_\gamma$ spaces admit non-trivial co-isometric
weighted composition operators, the conclusion also cannot be that the composition symbol
is linear. It is here where we have to assume that $d < \infty$.

The following result generalizes \cite[Theorem 5]{MMV20} from the disc to the ball.
The argument of Mart\'in, Mas and Vukoti\'c in one variable uses results such as the inner/outer factorization
in the Hardy space, wich are not available in several variables.
The argument below is entirely different.
The authors thank Frej Dahlin for pointing out a gap in a previous version of the proof.

\begin{lemma}
  \label{lem:unbounded}
  Let $d \in \mathbb{N}$ and let $\mathcal{H}$ be a unitarily invariant
  space with reproducing kernel $K$. Assume that $\sup_{z \in \mathbb{B}_d} K(z,z) = \infty$.
  If $\phi: \mathbb{B}_d \to \mathbb{B}_d$ and $\delta: \mathbb{B}_d \to \mathbb{C}$ are such that
  $W_{\delta,\phi}$ is co-isometric on $\mathcal{H}$, then $\phi \in \Aut(\mathbb{B}_d)$.
\end{lemma}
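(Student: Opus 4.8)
The plan is to show that $\phi \colon \mathbb{B}_d \to \mathbb{B}_d$ is surjective; combined with the injectivity from Lemma \ref{lem:WCO_ball}(d) and the fact that $\phi$ is holomorphic, this forces $\phi \in \Aut(\mathbb{B}_d)$ (a bijective holomorphic self-map of $\mathbb{B}_d$ is automatically biholomorphic, by the finite-dimensional Schwarz-lemma argument, or can be seen directly since the inverse of a holomorphic injection is holomorphic in several variables). From Lemma \ref{lem:WCO_ball}, setting $a = \phi(0)$, we have the functional equation
\begin{equation*}
  K(z,w) = \frac{K(\phi(z),\phi(w)) K(a,a)}{K(\phi(z),a) K(a,\phi(w))} \quad \text{ for all } z,w \in \mathbb{B}_d,
\end{equation*}
and $K(\phi(\cdot),a)$ is zero-free. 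The first move is to reduce to the case $a = 0$: by composing $W_{\delta,\phi}$ with the co-isometry $W_{\psi,\varphi_a}$ (where $\psi(z) = K(z,a) K(a,a)^{-1/2}$), which is co-isometric on $\mathcal{H}$ by Theorem \ref{thm:H_gamma_co_iso} — wait, that theorem is only for $\mathcal{H}_\gamma$. So instead I should argue directly. Actually the cleaner reduction: $\varphi_a \in \Aut(\mathbb{B}_d)$, so it suffices to prove that $\varphi_a \circ \phi$ is surjective; replacing $\phi$ by $\varphi_a \circ \phi$ changes $a$ to $0$ only if I know $W_{\psi, \varphi_a}$ is co-isometric, which I do not have for general $\mathcal{H}$. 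Therefore I will instead work directly with the displayed equation. Taking $w = z$ gives
\begin{equation*}
  K(z,z) = \frac{K(\phi(z),\phi(z)) K(a,a)}{|K(\phi(z),a)|^2}.
\end{equation*}
Since $\sup_z K(z,z) = \infty$, and since $K(\phi(z),\phi(z)) \le K(\zeta,\zeta)$ can only be large when $\phi(z)$ is near the boundary, this equation tells us $\phi(z) \to \partial \mathbb{B}_d$ along some sequence; the point is to convert this into surjectivity.

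The key idea I would pursue: use the functional equation to reverse-engineer the kernel. Rewriting the displayed equation as
\begin{equation*}
  K(\phi(z),\phi(w)) = \frac{K(z,w)}{K(a,a)} K(\phi(z),a) K(a,\phi(w)),
\end{equation*}
and abbreviating $g(z) = K(\phi(z),a)/K(a,a)^{1/2}$ (a zero-free holomorphic function with $g(0) = K(a,a)^{1/2}$), we get $K(\phi(z),\phi(w)) = K(z,w)\, g(z)\overline{g(w)}$ with the constraint that the left-hand side must be a genuine reproducing kernel of the form $\sum a_n \langle \cdot, \cdot\rangle^n$ evaluated on the range of $\phi$. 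I would then analyze the structure of $g$: fixing $w = 0$ gives $g(z) = K(\phi(z),a)/K(a,a)^{1/2}$ directly, and the self-consistency forces $K(\phi(z),a)$ to be expressible through $K$. The strategy is to show that, because $\mathcal{H} \neq \mathcal{H}_\gamma$, the coefficient sequence $(a_n)$ fails the multiplicativity-type relation that would be needed for $\phi$ to have image strictly inside $\mathbb{B}_d$ — essentially if $\phi(\mathbb{B}_d)$ omitted a boundary approach direction, one could run the argument of \cite[Proposition 4.3]{Hartz17a} on the relation above restricted to a slice and conclude $\mathcal{H} = \mathcal{H}_\gamma$, a contradiction. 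More concretely: restrict everything to a one-dimensional slice $z = \lambda \zeta$, $\zeta \in \partial\mathbb{B}_d$ fixed, reducing to the disc, where $k(\lambda,\mu) := K(\lambda\zeta,\mu\zeta) = \sum a_n (\lambda\bar\mu)^n$ and $\phi$ induces (after projecting) a holomorphic self-map-like object on $\mathbb{D}$; here I can hope to invoke the one-variable theory.

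The main obstacle, and where $d < \infty$ is genuinely used, will be controlling the boundary behaviour of $\phi$: I need to show that for each $\zeta \in \partial\mathbb{B}_d$ the map $\phi$ "reaches" $\zeta$ in the sense needed, and then upgrade pointwise boundary information to surjectivity on the open ball. In finite dimensions I can use that $\overline{\mathbb{B}_d}$ is compact, so a sequence $z_k$ with $K(z_k,z_k) \to \infty$ has $\phi(z_k)$ (a sequence in the compact $\overline{\mathbb{B}_d}$) subconverging to some boundary point, and more importantly I can use normal families / the finite-dimensional Schwarz lemma at interior points of the would-be image. The cleanest completion I would aim for: show $\phi$ is a proper map $\mathbb{B}_d \to \mathbb{B}_d$ (i.e. $\|\phi(z)\| \to 1$ as $\|z\| \to 1$), which follows from the $w=z$ equation above since $K(z,z)\to\infty$ forces $K(\phi(z),\phi(z))/|K(\phi(z),a)|^2 \to \infty$, and for a kernel of the stated form this quantity stays bounded unless $\|\phi(z)\| \to 1$; a proper holomorphic injection $\mathbb{B}_d \to \mathbb{B}_d$ is automatically a surjective automorphism. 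Properness plus injectivity plus the (finite-dimensional) fact that proper holomorphic self-maps of $\mathbb{B}_d$ that are injective are automorphisms is the engine; verifying the boundary estimate "$K(\phi(z),\phi(z))/|K(\phi(z),a)|^2$ bounded $\Rightarrow$ $\phi(z)$ bounded away from $\partial\mathbb{B}_d$" from $\mathcal{H}\neq\mathcal{H}_\gamma$ is the delicate point, presumably again routed through \cite[Proposition 4.3]{Hartz17a} or a direct slice argument.
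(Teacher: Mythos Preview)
Your overall strategy---establish surjectivity of $\phi$ via properness---is sound and is essentially how the paper proceeds (phrased there as showing $\partial G \cap \mathbb{B}_d = \emptyset$ for $G = \phi(\mathbb{B}_d)$). However, there is a genuine gap at precisely the point you flag as ``delicate''. The implication
\[
  \frac{K(\phi(z),\phi(z))}{|K(\phi(z),a)|^2} \to \infty \quad \Longrightarrow \quad \|\phi(z)\| \to 1
\]
fails when $K$ has zeros: if $\phi(z_n) \to w \in \mathbb{B}_d$ with $K(w,a) = 0$, the ratio blows up while $\phi(z_n)$ stays in a compact subset of $\mathbb{B}_d$. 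Such zeros do occur for unbounded kernels (e.g.\ $h(t) = (1+2t)/(1-t)$ has $a_0=1$, $a_n=3$ for $n\ge 1$, gives an unbounded unitarily invariant kernel, and vanishes at $t=-1/2$), so this case cannot be dismissed.

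Your proposed route around this is off target on two counts. First, the hypothesis $\mathcal{H} \neq \mathcal{H}_\gamma$ is \emph{not} part of this lemma; the conclusion $\phi \in \Aut(\mathbb{B}_d)$ holds for every unitarily invariant space with unbounded kernel, including the $\mathcal{H}_\gamma$ themselves. Second, \cite[Proposition 4.3]{Hartz17a} characterizes the $\mathcal{H}_\gamma$ via a kernel-transformation identity valid for \emph{all} automorphisms; it is invoked later (Proposition~\ref{prop:non_h_gamma_auto}) once $\phi$ is already known to be an automorphism, and gives no leverage on whether $\phi$ is proper.

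The paper handles the zero case by a geometric argument you are missing. Writing $K(z,w) = h(\langle z,w\rangle)$, the holomorphic function $h$ has only finitely many zeros $\lambda_1,\ldots,\lambda_k$ in $\{|t|\le \|a\|\}$, so $\{w : K(w,a)=0\}$ lies in the finite union of parallel hyperplanes $A_j = \{\langle \cdot,a\rangle = \lambda_j\}$. The argument above then gives $\partial G \cap \mathbb{B}_d \subset \bigcup_j A_j$, while $G \cap A_j = \emptyset$ for each $j$ since $K(\phi(\cdot),a)$ never vanishes. A separate topological lemma---using the Riemann removable singularities theorem on a complex line transverse to $A_j$---then shows that the boundary of a biholomorphic image of $\mathbb{B}_d$ cannot locally sit inside a complex hyperplane disjoint from the image. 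This last step is where $d < \infty$ is genuinely used.
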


\begin{proof}
  Let $a = \phi(0)$.
  Lemma \ref{lem:WCO_ball} shows that $\phi$ is holomorphic, injective, $K(\phi(z),a) \neq 0$ for all $z \in \mathbb{B}_d$ and
  \begin{equation}
    \label{eqn:unbounded_ratio}
    K(z,z) = \frac{K(\phi(z),\phi(z)) K(a,a)}{K(\phi(z),a) K(a,\phi(z))} \quad \text{ for all } z \in \mathbb{B}_d.
  \end{equation}
  Let $G = \phi(\mathbb{B}_d)$. Since $\phi$ is a holomorphic injection, $G$ is open and $\phi: \mathbb{B}_d \to G$ is a biholomorphism; see for instance 
  \cite[Theorem I.2.14]{Range86}.
It suffices to show that $\partial G \cap \mathbb{B}_d = \emptyset$. It then follows that
$G$ is relatively closed in $\mathbb{B}_d$, and hence $G = \mathbb{B}_d$ by connectedness of $\mathbb{B}_d$.

Suppose that $w \in \partial G \cap \mathbb{B}_d$. Then there exists a sequence $(z_n)$ in $\mathbb{B}_d$ such that
$z_n \to z \in \overline{\mathbb{B}_d}$ and $\phi(z_n) \to w$. Since $G$ is open, $w \notin G$, so $z \in \partial \mathbb{B}_d$.
Since $K(z_n,z_n) \to \infty$, it follows from \eqref{eqn:unbounded_ratio} that $K(w,a) = 0$.
If $K$ does not have any zeros, then we obtain a contradiction right away.

Otherwise, we argue as follows.
Recall that $K(z,w) = h(\langle z,w \rangle)$, where $h(t) = \sum_{n=0}^\infty a_n t^n$.
 Since $K(w,a) = 0$, we have $a \neq 0$.
The holomorphic function $h$ on $\mathbb{D}$ can only have finitely many zeros in $\{t \in \mathbb{D}: |t| \le \|a\| \}$,
say $\lambda_1,\ldots,\lambda_k$.
For $j=1,\ldots,k$, let
\begin{equation*}
  A_j = \{z \in \mathbb{C}^d: \langle z,a \rangle = \lambda_j \}.
\end{equation*}
Thus, $A_1,\ldots,A_k$ are distinct parallel complex hyperplanes.
Since $K(w,a) = 0$, we have $w \in A_j$ for some $j$.

This argument shows that
\begin{equation*}
  \partial G \cap \mathbb{B}_d \subset \bigcup_{j=1}^k A_j.
\end{equation*}
On the other hand, since $K(\phi(z),a) \neq 0$ for all $z \in \mathbb{B}_d$, we see that
\begin{equation*}
  G \cap \Big( \bigcup_{j=1}^k A_j \Big) = \emptyset.
\end{equation*}
In particular, if $w \in \partial G \cap \mathbb{B}_d$, then there exists an open ball $B$ centered at $w$
such that $\partial G \cap B \subset A_j$ for some $j$, while $G \cap A_j = \emptyset$.
This contradicts the lemma below, which then completes the proof of $\partial G \cap \mathbb{B}_d = \emptyset$.
\end{proof}

We now come to the missing lemma.

\begin{lemma}
  Let $\phi: \mathbb{B}_d \to G \subset \mathbb{C}^d$ be a biholomorphism,
  let $B \subset \mathbb{C}^d$ be an open ball and let
  $A \subset \mathbb{C}^d$ be complex hyperplane.
  If $\partial G \cap B \subset A$ and $G \cap A = \emptyset$, then $\partial G \cap B = \emptyset$.
\end{lemma}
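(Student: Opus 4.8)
The plan is to derive a contradiction from the existence of such a ball $B$ by exploiting that $\partial G$ would locally be contained in a complex hyperplane while being, by the biholomorphism $\phi$, the boundary of a domain biholomorphic to the ball. The key geometric tension is that a complex hyperplane is, from the point of view of the domain $G$, far too ``thin'' to serve as a boundary piece of a biholomorphic image of $\mathbb{B}_d$: near a boundary point, a domain must in some sense fill up a full-dimensional region on one side, but it cannot approach $A$ from a full neighborhood without meeting $A$ (which is excluded by $G \cap A = \emptyset$).

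First I would set up the local picture. After an affine change of coordinates (a unitary rotation plus a translation), I may assume the hyperplane $A$ is $\{z : z_1 = 0\}$ and that the center $w$ of $B$ is the origin. So $\partial G \cap B \subset \{z_1 = 0\}$ and $G \cap \{z_1 = 0\} = \emptyset$. Consider the connected components of $B \setminus \{z_1 = 0\}$, or more precisely of $B \cap G$: since $\partial G \cap B \subset \{z_1 = 0\}$, the set $B \setminus \{z_1 = 0\}$ decomposes into the two ``half-balls'' $B^+ = B \cap \{\operatorname{Re} z_1 > 0\}$-type pieces — actually one should be careful, since $\{z_1 = 0\}$ is real-codimension $2$, so $B \setminus \{z_1 = 0\}$ is \emph{connected}. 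This is the crucial point: a complex hyperplane does not locally disconnect $\mathbb{C}^d$. Hence $B \setminus A$ is connected, and $G \cap B$ is relatively open and relatively closed in $B \setminus A$ (open since $G$ is open; closed in $B \setminus A$ because $\partial G \cap (B \setminus A) = \emptyset$ as $\partial G \cap B \subset A$). Therefore either $G \cap B = \emptyset$ or $G \cap B = B \setminus A$. The first is impossible because $w \in \partial G$, so every neighborhood of $w$ meets $G$; thus $G \cap B = B \setminus A$.

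Now I exploit the biholomorphism. Since $G \supset B \setminus A$ and $G \cap A = \emptyset$, the set $\Omega := \phi^{-1}(B \setminus A)$ is a nonempty open subset of $\mathbb{B}_d$, and the first coordinate function $z \mapsto (\phi(z))_1$ is a nowhere-vanishing holomorphic function on $\Omega$; moreover, as $z$ ranges over $\Omega$, $(\phi(z))_1$ takes every value in some punctured neighborhood of $0$ in $\mathbb{C}$ (because $B \setminus A$ maps onto a set whose first-coordinate projection is a punctured disc around $0$). The contradiction comes from a Hartogs/removable-singularity type argument: the holomorphic function $g := 1/((\phi(\cdot))_1)$ on $\Omega$ would have to ``blow up'' as one approaches $\phi^{-1}$ of points near $A \cap B$, yet $w \in \partial G \cap B \subset A$ and $w$ is approached by $\phi(z_n)$ with $z_n \to z \in \partial\mathbb{B}_d$, so there is no genuine interior singularity in $\mathbb{B}_d$ — one must instead argue directly that $(\phi(\cdot))_1$ extends holomorphically across the ``missing'' hyperplane in $G$.

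Here is the cleaner route, which I'd actually write: by the above, $\phi^{-1}$ is a biholomorphism from $B \setminus A$ onto the open set $\Omega \subset \mathbb{B}_d$. The set $B \setminus A$ is a pseudoconvex-irrelevant but, more to the point, \emph{Hartogs} domain: since $A$ has complex codimension $1$, by the classical Riemann-type removable singularity theorem for the hyperplane (every bounded holomorphic function, indeed every holomorphic function, on $B \setminus A$ extends holomorphically to $B$, as $\operatorname{codim}_{\mathbb{C}} A = 1$ — wait, this needs $\operatorname{codim} \geq 2$ in general). The correct tool is instead: the coordinate functions $\psi_j := (\phi^{-1})_j : B \setminus A \to \mathbb{D}$ (composing with a chart, these are bounded holomorphic) extend holomorphically across $A \cap B$ by Riemann's removable singularity theorem in several variables \emph{provided} they are locally bounded near $A$, which they are since they map into the bounded set $\mathbb{B}_d$. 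Thus $\phi^{-1}$ extends to a holomorphic map $\Psi : B \to \overline{\mathbb{B}_d}$. By the open mapping / maximum principle, $\Psi(B) \subset \mathbb{B}_d$ unless $\Psi$ is constant; it is not constant since $\phi^{-1}$ is injective on $B \setminus A$. So $\Psi(A \cap B) \subset \mathbb{B}_d$, giving points $\zeta := \Psi(w) \in \mathbb{B}_d$. Now $\phi$ and $\Psi$ are mutually inverse on $B \setminus A$, so by continuity $\phi(\zeta) = \lim \phi(\Psi(z)) = \lim z = w$ as $z \to w$ in $B \setminus A$; hence $w = \phi(\zeta) \in \phi(\mathbb{B}_d) = G$. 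But $w \in \partial G$ and $G$ is open, so $w \notin G$ — contradiction. Therefore $\partial G \cap B = \emptyset$.

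The main obstacle, and the step requiring the most care, is the removable-singularity extension of $\phi^{-1}$ across $A \cap B$: one must justify that a bounded holomorphic map defined off a complex hyperplane (real codimension $2$) extends holomorphically across it. This is a standard consequence of the one-variable Riemann removable singularity theorem applied along complex lines transverse to $A$ together with the fact that the resulting extension is separately holomorphic and locally bounded, hence holomorphic by Hartogs; I would cite this rather than reprove it. The only other subtlety is the connectivity claim ``$B \setminus A$ is connected,'' which is immediate since $A$ has real codimension $2$ in $\mathbb{C}^d \cong \mathbb{R}^{2d}$.
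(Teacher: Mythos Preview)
Your argument is correct and follows essentially the same strategy as the paper's: use that $B \setminus A$ is connected (real codimension two) to conclude $B \setminus A \subset G$, then extend $\phi^{-1}$ holomorphically across $A$ via a removable-singularity theorem and obtain the contradiction $w \in G$. The only difference is tactical --- the paper restricts to a single complex line transverse to $A$ and applies the one-variable Riemann removable singularity theorem, while you extend on all of $B$ using the several-variable Riemann extension theorem for bounded holomorphic functions; both yield the same conclusion, and the paper's choice merely avoids citing the multivariable result. (One minor wording fix: you take $w$ to be the center of $B$ and simultaneously assume $w \in \partial G$; you should first pick $w \in \partial G \cap B$ and then shrink $B$ to a smaller ball centered at $w$.)
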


\begin{proof}
  Suppose that $w \in \partial G \cap B$. By applying an affine linear map,
  we may assume that $w = 0$ and that $A = \{0\} \times \mathbb{C}^{d-1}$.
  Since $0 \in \partial G$, there exist an open disc $D$ centered at $0$ and an open ball $B' \subset \mathbb{C}^{d-1}$
  centered at $0$ such that $D  \times B' \subset B$
  and $(D \times B') \cap G \neq \emptyset$.
  The set $U:=(D \setminus \{0\}) \times B'$ is connected
  as a Cartesian product of connected sets. Since $G \cap A = \emptyset$,
  we have $U \cap G \neq \emptyset$,
  and since $\partial G \cap B \subset A$, we have $U \cap \partial G = \emptyset$.
  Hence $U \subset G$ by connectedness of $U$; in particular, $(D \setminus \{0\}) \times \{0\} \subset G$.

  Now the Riemann removable singularities theorem for functions of one variable implies that
  \begin{equation*}
    D \setminus \{0\} \to \mathbb{B}_d, \quad z \mapsto \phi^{-1}(z,0),
  \end{equation*}
  extends to a holomorphic function $h: D \to \overline{\mathbb{B}_d}$.
  The maximum modulus principle shows that $\|h(0)\| \le \sup_{z \in \frac{1}{2} \partial D} \|h(z)\| < 1$,
  so $h$ takes values in $\mathbb{B}_d$.
  Then $(\phi \circ h) (z) = (z,0)$ for all $z \in D \setminus \{0\}$, and
  hence for all $z \in D$ by continuity. But this would mean that $0 \in G$,
  contradicting the fact that $0 \in \partial G$ and $G$ is open.
\end{proof}

We can now prove Theorem \ref{thm:non_H_gamma_intro}.

\begin{proof}[Proof of Theorem \ref{thm:non_H_gamma_intro}]
  (i) $\Rightarrow$ (ii) is trivial.

  (ii) $\Rightarrow$ (iii) If $W_{\delta,\phi}$ is a co-isometry, then Lemma \ref{lem:unbounded} and Proposition \ref{prop:non_h_gamma_auto} imply that $\phi$ is unitary. It then follows from Lemma \ref{lem:WCO_ball} (a) that $\delta$ is a unimodular constant.

  (iii) $\Rightarrow$ (i) If $\phi$ is unitary and $\delta$ is a unimodular constant, then Lemma \ref{lem:RKHS_WCO} implies that $W_{\delta,\phi}$ is unitary.
\end{proof}

We do not know if the natural analogue of Lemma \ref{lem:unbounded} continues to hold if $d = \infty$,
so we state this as a question.

\begin{question}
  Let $d = \infty$ and let $\mathcal{H}$ be a unitarily invariant space with reproducing kernel $K$
  with $\sup_{z \in \mathbb{B}_\infty} K(z,z) = \infty$. Let $\phi: \mathbb{B}_d \to \mathbb{B}_d$
  and $\delta: \mathbb{B}_d \to \mathbb{C}$ be such that $W_{\delta,\phi}$ is co-isometric on $\mathcal{H}$.
  Does it follow that $ \phi = \varphi_a V$ for some $a \in \mathbb{B}_d$ and some linear isometry $V$?
\end{question}

Note that this question has a positive answer for the spaces $\mathcal{H}_\gamma$ (by Theorem \ref{thm:H_gamma_co_iso}).
It also has a positive answer if $\phi(0) = 0$ (by combining Lemmas \ref{lem:WCO_ball} and \ref{lem:Schwarz_kernel}).

\begin{remark}
  If $d = \infty$, then the proof strategy of Lemma \ref{lem:unbounded} fails.
  Firstly, the image of an injective holomorphic map $\phi: \mathbb{B}_\infty \to \mathbb{B}_\infty$ need not be open.
  Secondly, there are injective holomorphic
  maps $\phi: \mathbb{B}_\infty \to \mathbb{B}_\infty$ satisfying $\lim_{\|z\| \to 1 } \|\phi(z)\| = 1$
  that are not the composition of an automorphism and a linear isometry.
  This is closely related to the existence of non-linear proper holomorphic injections $\phi: \mathbb{B}_d \to \mathbb{B}_{d'}$
  satisfying $\phi(0) = 0$, where $d' > d$; see for instance \cite{DAngelo03}.
  To give a concrete example, let
  \begin{equation*}
    \psi: \ell^2 \to \ell^2 \oplus (\ell^2 \otimes \ell^2), \quad z \mapsto \frac{1}{\sqrt{2}} (z ,z \otimes z),
  \end{equation*}
  let $U: \ell^2 \oplus (\ell^2 \otimes \ell^2) \to \ell^2$ be a unitary, and set $\phi = U \psi$.
  Then $\phi$ is a holomorphic injection satisfying $\|\phi(z)\|^2 = \frac{1}{2} (\|z\|^2 + \|z\|^4)$,
  so $\phi$ takes $\mathbb{B}_\infty$ to $\mathbb{B}_\infty$ and $\lim_{\|z\| \to 1} \|\phi(z)\| = 1$.
  But $\phi(0) = 0$ and $\phi$ is not linear, so $\phi$ is not of the form $\phi = \varphi_a V$
  for some linear isometry $V$.

  However, note that since $\phi(0) = 0$, the map $\phi$ cannot implement a co-isometric weighted composition operator on a unitarily invariant space by the remarks above.
\end{remark}

\bibliographystyle{amsplain}
\bibliography{literature}

\end{document}